\documentclass{amsart}
\usepackage[cp1251]{inputenc}
\usepackage[english]{babel}
\usepackage{amsmath}
\usepackage{amssymb}
\usepackage{amsfonts}
\usepackage[english]{babel}
\usepackage[T2A]{fontenc}
\usepackage{amssymb, amsthm, amsmath}
\usepackage{graphicx}
\usepackage{float}
\usepackage[usenames]{color}
\usepackage{colortbl}
\usepackage{subfigure}
\newtheorem{theorem}{Theorem}

\newtheorem{lemma}{Lemma}
\newtheorem{propos}{Proposition}

\def\udcs{517.958} 

\renewcommand{\geq}{\geqslant}

\numberwithin{equation}{section}

\begin{document}
	
	UDC \udcs
	\thispagestyle{empty}

	\title[On components of stable connectivity\dots]{On components of stable connectivity of gradient-like diffeomorphisms of the 2-torus}

	\author{D.A. Baranov, E.V. Nozdrinova, O.V. Pochinka}
	
	\address{Denis Alekseevich Baranov,
		\newline\hphantom{iii} HSE University,
		\newline\hphantom{iii}Bolshaya Pecherskaya str., 25/12
		\newline\hphantom{iii} 603155, Nizhny Novgorod, Russia
	}
	\email{denbaranov0066@gmail.com}
	
	\address{Elena Vyacheslavovna Nozdrinova,
		\newline\hphantom{iii} HSE University,
		\newline\hphantom{iii}Bolshaya Pecherskaya str., 25/12
		\newline\hphantom{iii} 603155, Nizhny Novgorod, Russia
	}
	\email{maati@mail.ru}
	\address{Olga Vitalievna Pochinka,
		\newline\hphantom{iii} HSE University,
		\newline\hphantom{iii}Bolshaya Pecherskaya str., 25/12
		\newline\hphantom{iii} 603155, Nizhny Novgorod, Russia
	}
	\email{olga-pochinka@yandex.ru}
	
	\thanks{\sc Baranov D.A., Nozdrinova E.V., Pochinka O.V., On the components of stable connections of gradient-like diffeomorphisms of a 2-torus.}
	\thanks{\copyright \ D.A. Baranov, E.V. Nozdrinova, O.V. Pochinka \ 2025}
	\thanks{\rm This scientific work uses the results of the project ``Symmetry. Information. Chaos'', carried out within the framework of the HSE University Fundamental Research Program}
	
	\maketitle {\small
		\begin{quote}
			\noindent{\bf Abstract.} Gradient-like diffeomorphisms of a closed surface $M^2$ are characterized by a finite hyperbolic limit set and the absence of intersections of invariant manifolds of distinct saddle points. In the case where such diffeomorphisms $f_0, f_1:M^2\to M^2$ are isotopic, they are connected by some arc $\{f_t:M^2\to M^2, t\in [0,1]\}$ in the space of diffeomorphisms. If every diffeomorphism of the arc has a finite limit set and the arc is stable (does not change its qualitative properties under small perturbations) in the space of diffeomorphisms, then $f_0,f_1$ are said to be {\it stably connected}. Thus, the set of isotopic diffeomorphisms splits into components of stable connectivity, of which there may, in general, be infinitely many. For instance, it is known that gradient-like diffeomorphisms of the 2-sphere (both orientation-preserving and orientation-reversing) consist of a countable number of stable connectivity components. Moreover, belonging to a particular component is uniquely determined by the periodic data of the diffeomorphism. In the present paper, we consider gradient-like diffeomorphisms of the 2-torus that are not isotopic to the identity. We establish that the set of such diffeomorphisms splits into a finite number of stable connectivity components. For each isotopy class, we define the periodic data of the diffeomorphism, which uniquely determine membership in a given component.
			
			\medskip
			
			\noindent{\bf Keywords:} {gradient-like diffeomorphism, torus, attractor, stable arc.}

			\medskip
			\noindent{\bf Mathematics Subject Classification: }{37E30, 58D05}
			
		\end{quote}
	}

	\section{Introduction and statement of results}
	Let $M^n$ be a closed smooth\footnote{Wherever smooth objects are discussed, $C^\infty$-smoothness is assumed.} connected Riemannian $n$-manifold. Diffeomorphisms $\varphi_0,\varphi_1:M^n\to M^n$ are called {\it diffeotopic} if there exists a family of diffeomorphisms $\varphi_t:M^n\to M^n$ smoothly depending on the parameter $t\in[0,1]$, called an {\it arc}. According to \cite{NPT}, an arc $\varphi_t:M^n\to M^n$ is called {\it stable} if it is an interior point of an equivalence class under the following relation: arcs $\varphi_t$, $\varphi'_t$ are called {\it conjugate} if there exist homeomorphisms $h:~[0,1]~\to~[0,1], \, H_t:~M^n \to M^n$ such that $H_t \varphi_t = \varphi'_{h(t)} H_t$, and $H_t$ depends continuously on $t$.
	
	Morse-Smale diffeomorphisms (structurally stable diffeomorphisms with finite limit set) are said to belong to the same {\it stable connectivity} component if they can be connected in the space of diffeomorphisms by a stable arc $\varphi_t$ consisting of diffeomorphisms with finite limit set. In \cite{NPT}, it is established that all diffeomorphisms in such an arc are structurally stable, except for finitely many bifurcation diffeomorphisms $\varphi_{b_i}, i=1,\dots,q$, such that:
	\begin{itemize}
		\item[1)] the limit set of $\varphi_{b_i}$ contains exactly one non-hyperbolic periodic orbit, which is a saddle-node or a flip;
		\item[2)] $\varphi_{b_i}$ has no cycles;
		\item[3)] the invariant manifolds of all periodic points of $\varphi_{b_i}$ intersect transversally;
		\item[4)] the transition through $\varphi_{b_i}$ is a generically unfolded saddle-node or period doubling bifurcation, wherein the saddle-node point is non-critical (see, e.g., \cite{medved} for precise definitions).
	\end{itemize}

	It is known that the simplest Morse-Smale diffeomorphisms are the so-called {\it gradient-like diffeomorphisms}, characterized on the surface $M^2$ by the absence of intersections of the stable and unstable manifolds of different saddle points. The fact that isotopic gradient-like diffeomorphisms may not be connected by a stable arc was first noted in the paper \cite{blan}. The proof of this fact was based on the construction of a special filtration for a gradient-like diffeomorphism. However, the "sameness" of the filtrations is only a necessary, but not a sufficient condition for the existence of a stable arc between diffeomorphisms.

	As it turned out, a more promising approach to the stable classification of gradient-like diffeomorphisms is based on a close relationship between these diffeomorphisms and periodic transformations. Namely, it follows from the results of \cite{treh} that such diffeomorphisms $f:M^2\to M^2$ can be expressed as a composition
	\begin{equation}\label{comp}
		f = \phi_f \xi^1_f
	\end{equation} 
	of a periodic homeomorphism $\phi_f$ and the time-one map of a flow $\xi^t_f$ equivalent to the gradient flow of some Morse-Smale function, such that $\phi_f \xi^t_f = \xi^t_f \phi_f$ (see Proposition \ref{mn} below).  
	
	Recall that a {\it Morse-Smale function} is a Morse function whose invariant (stable and unstable) manifolds of its critical points intersect transversally.
	
	A homeomorphism $\phi :M^2 \rightarrow M^2$ is called {\it periodic} of order $m_\phi \in \mathbb{N}$ if $\phi^{m_\phi}=id$ and $\phi^j \neq id$ for any natural $j<m_\phi$. The classification problem of periodic homeomorphisms of surfaces up to topological conjugacy is completely solved in \cite{Nielsen37}, \cite{yoko}, \cite{yoko2}, \cite{yoko22}. 
	
	Using the relation \eqref{comp}, it was established in \cite{np2021} that gradient-like diffeomorphisms of the 2-sphere consist of a countable number of stable connectivity components. Moreover, membership in a particular component is uniquely determined by the periodic data of the diffeomorphism.
	
	In the present paper, we solve an analogous problem for orientation-preserving gradient-like diffeomorphisms of the 2-torus that are not isotopic to the identity.

	Every homeomorphism $\varphi:\mathbb T^2\to\mathbb T^2$ of the two-dimensional torus induces an action on the fundamental group, uniquely determined by a matrix $\varphi_\bigstar\in GL(2,\mathbb Z)$. Moreover, homeomorphisms $\varphi,\varphi':\mathbb T^2\to\mathbb T^2$ are isotopic if and only if $\varphi_\bigstar=\varphi'_\bigstar$. Thus, the isotopy classes of homeomorphisms are countable, and each such class is uniquely determined by a matrix $A\in GL(2,\mathbb Z)$. Matrices $A,A'\in SL(2,\mathbb Z)$ are called {\it similar over $\mathbb Z$} ($A\sim A'$) if there exists a matrix $B\in GL(2,\mathbb Z)$ such that $A'=BAB^{-1}$. For any matrix $B=\begin{pmatrix} a & c\\ b& d\end{pmatrix}\in GL(2,\mathbb Z)$, denote by $\widehat B:\mathbb T^2\to\mathbb T^2$ the algebraic diffeomorphism given by $$\widehat B(x,y)=(x,y)\begin{pmatrix} a & c\\ b& d\end{pmatrix}=(ax+by,cx+dy)\pmod 1.$$
	Set $$A_1=\begin{pmatrix} -1 & 0\\ 0& -1\end{pmatrix},\,A_2=\begin{pmatrix} -1 & -1\\ 1& 0\end{pmatrix},\,A_3=\begin{pmatrix} 0 & -1\\ 1& 0\end{pmatrix},\,A_4=\begin{pmatrix} 0 & -1\\ 1& 1\end{pmatrix}.$$ 
	
	\begin{propos}[\cite{BGPC}, Theorem 3]\label{perA} The algebraic diffeomorphisms $\widehat A_j,\,j\in\{1,\dots,4\}$ are periodic homeomorphisms, and any orientation-preserving periodic homeomorphism $\phi:\mathbb T^2\to\mathbb T^2$ not isotopic to the identity is topologically conjugate to exactly one of them. Moreover,
		$\phi\,\,\text{is topologically conjugate to}\,\,\widehat A_j\iff \phi_\bigstar\sim A_j.$
	\end{propos}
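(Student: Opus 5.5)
The argument has three parts. First we check directly that each $\widehat A_j$ is periodic. Then we identify the finite subgroups of $SL(2,\mathbb Z)$ up to $\mathbb Z$-similarity. Finally, a periodic homeomorphism is determined up to conjugacy by its action on $\pi_1$; we get this from the classical Nielsen--Yokoyama theory, which Proposition~\ref{perA} cites from \cite{BGPC}.

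\textbf{Periodicity of the models.} Compute $A_1^2=E$, $A_2^3=E$, $A_3^4=E$, $A_4^6=E$, with no smaller positive power equal to $E$. Since $\widehat{B}\widehat{C}=\widehat{CB}$ (the matrix acts on row vectors on the right), $\widehat A_j^{\,k}=\widehat{A_j^k}$. Hence $\widehat A_j$ is periodic of order $2,3,4,6$ respectively. Each $A_j$ has determinant $1$, so $\widehat A_j$ preserves orientation, and $A_j\ne E$, so $\widehat A_j$ is not isotopic to the identity.

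\textbf{Classification of the matrix.} Let $\phi$ be orientation preserving and periodic of order $m$, not isotopic to $id$. Then $\phi_\bigstar\in SL(2,\mathbb Z)$, $\phi_\bigstar\ne E$, and $\phi_\bigstar^m=E$. A finite-order element of $SL(2,\mathbb Z)$ has eigenvalues that are roots of unity with integer trace $t\in\{-2,-1,0,1\}$ ($t=2$ forces the identity). The characteristic polynomials are therefore $(x+1)^2$, $x^2+x+1$, $x^2+1$, $x^2-x+1$.

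\begin{itemize}
\item For $t=-2$ the matrix is diagonalizable because it has finite order, so it equals $-E=A_1$.
\item For the other three cases, $\mathbb Z^2$ becomes a module over $\mathbb Z[\zeta]$ with $\zeta$ a primitive $3$rd, $4$th or $6$th root of unity. Since $\mathbb Z[\zeta]$ is a PID (class number one), $\mathbb Z^2$ is a rank-one free module, i.e.\ isomorphic to $\mathbb Z[\zeta]$ with multiplication by $\zeta$.
\end{itemize}

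So any two matrices with the same characteristic polynomial are conjugate in $GL(2,\mathbb Z)$, and each is $\sim A_2,A_3,A_4$ respectively, matching the characteristic polynomials of these $A_j$. Distinct $A_j$ have distinct traces and are not similar. This gives uniqueness and the implication ``conjugate to $\widehat A_j$ $\Rightarrow$ $\phi_\bigstar\sim A_j$'', since conjugacy by $h$ gives $\phi_\bigstar=h_\bigstar^{-1} (A_j) h_\bigstar$ up to the transpose convention.

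\textbf{Realization by conjugacy (main obstacle).} We must show that $\phi_\bigstar\sim A_j$ implies $\phi$ is conjugate to $\widehat A_j$. Replacing $\phi$ by $\widehat B\phi\widehat B^{-1}$, we may assume $\phi_\bigstar=A_j$, so $\phi$ is isotopic to $\widehat A_j$.

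We then invoke the Nielsen--Kerékjártó theorem that a periodic surface homeomorphism is conjugate to an isometry of a constant-curvature metric. Here the metric is flat, so $\phi$ is conjugate to an affine map $x\mapsto xA_j+v$ of $\mathbb T^2$. Because $A_j-E$ is invertible over $\mathbb Q$, the translation $v$ can be removed by conjugating with the translation $x\mapsto x+w$, where $w(A_j-E)=-v$ (solvable mod $1$).

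Thus $\phi$ is conjugate to $\widehat A_j$. The genuinely nontrivial input is the geometrization of the finite cyclic action; this is exactly what the cited classification in \cite{Nielsen37,yoko} supplies.
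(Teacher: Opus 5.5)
Your proof is correct and takes a different route from the paper's source. Note that the paper gives no proof of this statement: it is quoted from \cite{BGPC}. There, following Nielsen, one attaches to an orientation-preserving periodic map of $\mathbb T^2$ a complete characteristic (order, exceptional orbits and rotation data at them), finds which characteristics are admissible, and realizes each by an algebraic automorphism.

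You instead split the statement into two parts. The algebraic part classifies torsion elements of $SL(2,\mathbb Z)$ up to $GL(2,\mathbb Z)$-similarity, using Cayley--Hamilton and the fact that $\mathbb Z[i]$ and $\mathbb Z[e^{2\pi i/3}]$ are principal ideal domains (a Latimer--MacDuffee type argument). The geometric part uses the Brouwer--Ker\'ekj\'art\'o--Eilenberg theorem that a periodic surface homeomorphism is conjugate to an isometry of a constant-curvature metric, and then removes the translation with $w=-v(A_j-E)^{-1}$.

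Your route is shorter and yields the criterion $\phi_\bigstar\sim A_j$ and the uniqueness at once. The price is that all the topology sits in the geometrization theorem, which is not really what \cite{Nielsen37,yoko} supply. The Nielsen-characteristic route, by contrast, directly produces the orbit data $2\,(1,1,1,1)$, $3\,(1,1,1)$, $4\,(2,1,1)$, $6\,(3,2,1)$ that the paper uses later. In your setting you would have to read these off the models separately, e.g.\ from $|\det(A_j^k-E)|$.

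One detail to tidy. The flat model you obtain is $x\mapsto xM+v$ with $M$ only $GL(2,\mathbb Z)$-similar to $A_j$, since the conjugating homeomorphism need not act trivially on $\pi_1$. Because $\widehat C^{-1}\circ(x\mapsto xM+v)\circ\widehat C$ is $x\mapsto xCMC^{-1}+vC^{-1}$, one further conjugation by a suitable $\widehat C$ makes the linear part exactly $A_j$ before you remove $v$.
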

	
	Then from relation (\ref{comp}) it follows that the set of orientation-preserving gradient-like diffeomorphisms of the 2-torus not isotopic to the identity splits into 4 pairwise disjoint subsets $$\mathcal G_j=\left\{f:\mathbb T^2\to\mathbb T^2:\,f_\bigstar\sim A_j\right\},\,j\in\{1,\dots,4\}.$$
	
	Consider the following subset of $\mathcal G_j$: $$G_j=\left\{g:\mathbb T^2\to\mathbb T^2:\,g_\bigstar=A_j\right\}.$$
	Then any diffeomorphism $f\in\mathcal G_j$ is topologically conjugate to the diffeomorphism $g=\widehat B f\widehat B^{-1}\in G_j$, where $B\in GL(2,\mathbb Z)$ is a matrix such that $A_j=Bf_\bigstar B^{-1}$. Consequently, the problem of partitioning the set of isotopic diffeomorphisms into stable connectivity components can be solved for the sets $G_j$. For this, we will use the {\it characteristic} of the periodic homeomorphism $\widehat A_j$, which has the form $$n\,(n_1,\dots,n_k),$$ where $n$ is the period of $\widehat A_j$ and $n_1,\dots,n_k$ are the periods of its orbits smaller than $n$. According to \cite{BGPC}, 
	the homeomorphisms $\widehat A_j,j\in\{1,\dots,4\}$ have the following characteristics:
	$$\widehat A_1\sim 2\,(1,1,1,1),\,\widehat A_2\sim 3\,(1,1,1),\,\widehat A_3\sim 4\,(2,1,1),\,\widehat A_4\sim 6\,(3,2,1).$$
	By \cite{treh}, the periodic orbits of any diffeomorphism $g\in G_j$ have period $n$, except for $k$ orbits having periods $n_1,\dots,n_k$, where $n\,(n_1,\dots,n_k)$ is the characteristic of $\widehat A_j$. Let $i_1,\dots,i_k$ be the Morse indices (dimensions of unstable manifolds) of the orbits of $g$ with periods $n_1,\dots,n_k$, respectively. According to \cite{treh}, $i_l\neq 1$ if $n_l<\frac{n}{2}$. We will call the tuple $({n_1}_{_{i_1}},\dots,{n_k}_{_{i_k}})$ the {\it characteristic} of the diffeomorphism $g$. Denote by $$\langle {n_1}_{_{i_1}},\dots,{n_k}_{_{i_k}}\rangle$$ the subset of diffeomorphisms in the class $G_j$ having characteristic $({n_1}_{_{i_1}},\dots,{n_k}_{_{i_k}})$.
	
	The main result of this paper is the proof of the following theorem.
	
	\begin{theorem}\label{C_i} Each of the sets $G_j,\,j\in\{1,\dots,4\}$ consists of a finite number $r_j$ of stable connectivity components $G_j^0,\dots,G_j^{r_j-1}$, where:
		\begin{itemize}
			\item[1)] $r_1=1$, $G_1^0=G_1$;
			\item[2)] $r_2=4$, $G_2^0=\langle 1_0,1_0,1_0\rangle$, $G_2^1=\langle 1_0,1_0,1_2\rangle$,
			$G_2^2=\langle 1_0,1_2,1_2\rangle$,
			$G_2^3=\langle 1_2,1_2,1_2\rangle$;
			\item[3)] $r_3=3$, $G_3^0=\bigsqcup\limits_{i=0}^2\langle 2_i,1_0,1_0\rangle$, $G_3^1=\bigsqcup\limits_{i=0}^2\langle 2_i,1_0,1_2\rangle$,
			$G_3^2=\bigsqcup\limits_{i=0}^2\langle 2_i,1_2,1_2\rangle$;
			\item[4)] $r_4=4$, $G_4^0=\bigsqcup\limits_{i=0}^2\langle 3_i,2_0,1_0\rangle$, $G_4^1=\bigsqcup\limits_{i=0}^2\langle 3_i,2_0,1_2\rangle$,
			$G_4^2=\bigsqcup\limits_{i=0}^2\langle 3_i,2_2,1_0\rangle$,
			$G_4^3=\bigsqcup\limits_{i=0}^2\langle 3_i,2_2,1_2\rangle$.
		\end{itemize}
	\end{theorem}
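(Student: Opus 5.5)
The proof has two halves: an \emph{invariance} part, showing that the listed characteristic data cannot change along a stable arc inside $G_j$, and a \emph{connectivity} part, showing that any two diffeomorphisms with the same data can be joined by a stable arc. Throughout we use the decomposition (\ref{comp}) $g=\phi_g\xi^1_g$ with $\phi_g$ periodic and conjugate to $\widehat A_j$ (Proposition \ref{perA}), together with the structure of stable arcs from \cite{NPT}.

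For invariance, we go through the bifurcations allowed on a stable arc in $G_j$. Such an arc passes only through saddle-node and flip bifurcations of a single periodic orbit. A flip doubles the period of an orbit. A saddle-node creates or annihilates a pair of orbits of equal period with Morse indices differing by one. Every diffeomorphism on the arc lies in the isotopy class $A_j$, so its periodic orbits have period $n$ except for exactly $k$ orbits of periods $n_1,\dots,n_k$ (by \cite{treh}). Hence the orbit of exceptional period $n_l$ can only change in one of two ways: it merges with another orbit of the same period in a saddle-node, or it undergoes a flip. In the second case the doubled period must still be admissible, which is possible only when $2n_l=n$.

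By \cite{treh}, an orbit with $n_l<n/2$ cannot be a saddle. A saddle-node involving it would require a partner orbit of period $n_l$ and index $1$, which would be a second exceptional orbit of that period. A flip would produce period $2n_l<n$, which is forbidden. So for $n_l<n/2$ the index $i_l\in\{0,2\}$ is preserved along the arc. This is the content of items 2) and 4) for the periods $1$ and $2$ in $A_4$ (there $n=6$), and also for the period $1$ orbits in $A_2$ and $A_3$.

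When $n_l=n/2$ (the period $1$ orbits of $A_1$, the period $2$ orbit of $A_3$, and the period $3$ orbit of $A_4$), a flip of an index-$0$ orbit of period $n/2$ can turn it into a saddle of period $n/2$ together with a doubled orbit of period $n$. Index $1$ is allowed there, so all indices $0,1,2$ are reachable. This explains the unions over $i$ in items 3) and 4).

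Item 1) needs a separate argument: for $A_1$, all four fixed points of $\widehat A_1$ have $n_l=1=n/2$, so no invariant survives. The hardest part of invariance is to make rigorous that the saddle-node partner of an exceptional orbit must itself be exceptional. We will do this by showing that the set of orbits of period $n_l$ varies continuously along the arc away from the bifurcation points, and then counting these orbits via the Lefschetz formula for $g^{n_l}$.

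For connectivity, fix a diffeomorphism in each class, for instance $\widehat A_j\circ\xi^1$ for an equivariant gradient flow whose rest points at the exceptional fixed points have the prescribed indices. We then show that every $g$ with the same characteristic can be joined to this model by a stable arc, in three steps.

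1. Isotope $\phi_g$ to $\widehat A_j$ through a conjugacy. Because the isotopy class is fixed, this is a path of conjugations, so no bifurcations occur.
2. Reduce the flow $\xi^t_g$ to its quotient on the orbifold $\mathbb T^2/\phi_g$, which is a sphere with cone points. On this quotient, use equivariant versions of Morse function cancellation (saddle-node arcs) to remove all non-exceptional orbits of period $n$ until a minimal configuration remains, following the scheme of \cite{np2021} for the sphere.
3. Where needed, use flips at the cone points of order $2$ to adjust $i_l$ for orbits with $n_l=n/2$.

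We expect step 2 to be the main obstacle. One must check that on each orbifold sphere with the given cone data, any two equivariant Morse--Smale gradient-like systems with the same indices at the cone points are connected through generic births and deaths of critical points. We would attack this with Cerf-type theory for functions on the orbifold: after removing small neighbourhoods of the cone points, the problem becomes the Cerf connectivity of Morse functions on a sphere with holes and fixed boundary behaviour.

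Finally, the count $r_j$ of components is the number of admissible choices of $i_l\in\{0,2\}$ for the orbits with $n_l<n/2$, taken up to permutation of orbits of equal period. This gives $r_2=4$, $r_3=3$ and $r_4=4$, matching the list in Theorem \ref{C_i}.
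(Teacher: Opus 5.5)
Your invariance half is correct. It differs only mildly from the paper's Lemma~\ref{step1}. The paper excludes a saddle-node at a nodal orbit of period $p<m/2$ by a local argument: near it $g^{p}$ is a homothety composed with a rotation by an angle $\neq 0,\pi$, so there is no centre manifold. You instead note that the partner would be a saddle of period $p<n/2$, which Proposition~\ref{mn} forbids, and that saddle-nodes or flips would change the fixed number of orbits of an exceptional period. Two phrases should be tightened. A ``second exceptional orbit of that period'' is not itself a contradiction in $G_2$ or $G_3$. Also, $2n_l=2$ is an exceptional period in $G_3$ and $G_4$. In both cases the actual contradiction is the change in the count. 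Your Step~3 matches the paper's case 2), where a flip turns a nodal period-$n/2$ orbit into a saddle.

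The genuine gap is the connectivity half, which carries the real content of the theorem.

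\textbf{Step 2 is unproven, and Cerf theory cannot supply it.} You assert that any two equivariant gradient-like systems with the same indices at the cone points of order $>2$ are joined by births, deaths and flips. A generic Cerf path of gradient-like vector fields crosses not only birth--death points but also saddle connections (handle slides), which have codimension one on a surface. At such a parameter, $g_t=\phi\,\xi^1_t$ has two coinciding saddle separatrices. That diffeomorphism is neither structurally stable nor one of the bifurcations 1)--4) allowed on a stable arc, so the resulting arc is not stable.

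\textbf{What the paper does instead (say for $G_4^2$).} It uses a reduction built only from admissible moves, plus a rigidity statement for the end point.
\begin{itemize}
\item Formula (\ref{sum}) and minimality pin down the simplest diffeomorphism: a fixed sink, a period-$2$ source orbit and a period-$3$ saddle orbit.
\item The homotopy types of the knots $g^k(W^u_\sigma\cup\omega)$, together with the tricolor graph, show this model is unique up to conjugacy.
\item For arbitrary $g$, after a flip makes the period-$3$ orbit a saddle, $A=\bigcup_k g^k(W^u_\sigma\cup\omega)$ is an attractor.
\item The complement of its trapping neighbourhood is two disks $D_1,D_2$, with $g^{-1}(D_1)\subset{\rm int}\,D_2$ and $g^{-1}(D_2)\subset{\rm int}\,D_1$, containing $\alpha$ and $g(\alpha)$.
\item Proposition~\ref{sad-no}, applied to $g^{-1}$, collapses everything inside these disks to the single source orbit. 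No saddle connection ever appears.
\end{itemize}
Your phrase ``remove orbits until a minimal configuration remains'' has no counterpart of either ingredient. It shows neither that a cancellable pair always exists nor that two minimal configurations with the same data are conjugate.

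\textbf{Step 1 is also incomplete.} It only produces a path of homeomorphisms, because $\phi_g$ and the conjugacy are merely topological. You need, as the paper does, the result from \cite{np2021} that topologically conjugate gradient-like diffeomorphisms are joined by a stable arc. The conjugacy must first be made isotopic to the identity, which you can do by composing with an element of the centralizer of $A_j$.
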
	
	
	\section{Necessary information about gradient-like diffeomorphisms of surfaces}
	\subsection{Morse-Smale diffeomorphisms} In this section, we briefly outline the necessary information about Morse-Smale diffeomorphisms (see, e.g., \cite{grin} for detailed information).
	
	Let $M^n$ be a closed smooth connected Riemannian $n$-manifold with metric $d$ and $f$ a diffeomorphism on $M^n$. For a diffeomorphism $f$, a point $x\in M^n$ is called {\it wandering} if there exists an open neighborhood $U_x$ of $x$ such that $f^{n}(U_x)\cap U_x=\emptyset$ for all $n\in \mathbb{N}$. Otherwise, the point $x$ is called {\it non-wandering}. 
	It follows directly from the definition that every point in a neighborhood $U_x$ is wandering, and hence the set of wandering points is open, while the non-wandering set is closed. 
	
	The set of all non-wandering points of a diffeomorphism $f$ is called the {\it non-wandering set} and is denoted by $\Omega_f$.
	
	The simplest examples of hyperbolic sets are primarily hyperbolic fixed points of a diffeomorphism, which can be classified as follows.
	Let $f:M^n\to M^n$ be a diffeomorphism and $f(p)=p$. The point $p$ is {\it hyperbolic} if and only if none of the eigenvalues of the derivative $D_pf$ has modulus equal to 1. If, moreover, all eigenvalues have modulus less than 1, then $p$ is called an {\it attracting, sink point or sink}; if all eigenvalues have modulus greater than 1, then $p$ is called a {\it repelling, source point or source}; otherwise, $p$ is called a {\it saddle point or saddle}.
	
	The hyperbolic structure at a fixed point $p$ implies the existence of its {\it stable} $W^s_p=\{x\in M^n:\,\lim\limits_{k\to +\infty}d(f^{k}(x),p)\to 0\}$ and {\it unstable} $W^u_p=\{x\in M^n:\,\lim\limits_{k\to +\infty}d(f^{-k}(x),p)\to 0\}$ manifolds, which are injective immersions of $\mathbb R^{n-q_p}$ and $\mathbb R^{q_p}$, respectively. Here $q_p$ is the number of eigenvalues of $D_pf$ with modulus greater than $1$, called the {\it Morse index} of $p$. The stable or unstable manifold is called an {\it invariant manifold} of this point.
	
	If $p$ is a periodic point of $f$ with period $m_p$, then applying the previous construction to $f^{m_p}$ yields a classification of hyperbolic periodic points analogous to the classification of hyperbolic fixed points.
	
	A diffeomorphism $f:M^n\to M^n$ is called a {\it Morse-Smale diffeomorphism} if
	
	1) the non-wandering set $\Omega_f$ consists of a finite number of hyperbolic points;
	
	2) the manifolds $W^s_p$, $W^u_q$ intersect transversally for any non-wandering points $p$, $q$.
	
	Thus, every non-wandering point $p$ of a Morse-Smale diffeomorphism $f:M^n\to M^n$ is a hyperbolic periodic point. Moreover, due to the finiteness of $\Omega_f$, the invariant manifolds $W^u_p,\,W^s_p$ are smooth submanifolds of the ambient manifold $M^n$ of dimensions $\dim\,W^u_p=q_p,\,\dim\,W^s_p=n-q_p$, respectively. A connected component of the set $W^u_p\setminus p$ ($W^s_p\setminus p$) is called an unstable (stable) {\it separatrix}. We say that a point $p\in\Omega_f$ of period $m_p$ has {\it positive (negative) orientation type} if the diffeomorphism $f^{m_p}|_{W^u_p}$ preserves (reverses) orientation. Due to the hyperbolicity of $\Omega_f$, it coincides with the limit set of $f$. For any subset $P\subset\Omega_f$, denote by $W^u_P\,(W^s_P)$ the union of the unstable (stable) manifolds of all points in $P$.
	
	Denote by $$\Omega_f^q,\,q\in\{0,\dots,n\}$$ the subset of the non-wandering set $\Omega_f$ of a Morse-Smale diffeomorphism $f:M^n\to M^n$ consisting of points with Morse index $q$. Set $C_q=|\Omega_f^q|$. The symbol $\beta_q(M^n) = \beta_q$ denotes the {\it $q$-th Betti number}, i.e., 
	$$\beta_q(M^n) = \mathrm{rank}\, H_q(M^n, \mathbb Z).$$ Let $\chi(M^n)$ denote the Euler characteristic of $M^n$, i.e., 
	\[\sum\limits_{q=0}^n(-1)^q \beta_q = \chi(M^n).\]
	\begin{propos}[Lefschetz-Hopf Theorem, \cite{ShubSull_Homology75},\cite{Smale_Morse60}]\label{lefschetz} For any Morse-Smale diffeomorphism $f:M^n\to M^n$, the following relations hold:
		\begin{align}\label{beti}
			& C_0 \geq \beta_0, & C_1-C_0 \geq \beta_1 - \beta_0,\,\,\,\,\,\,\,\, & C_2 - C_1 + C_0 \geq \beta_2 -\beta_1 + \beta_0,\,\,\dots\,\,,
		\end{align}
		\begin{equation}\label{sum}
			\sum\limits_{q=0}^n(-1)^q C_q= \chi(M^n).
		\end{equation}
	\end{propos}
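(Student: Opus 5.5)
The plan is to derive both relations from a Morse–Smale filtration and the homology of the pairs it produces. This is Smale's approach, extended to diffeomorphisms in the Shub–Sullivan form. By the no-cycle property of Morse–Smale diffeomorphisms, I will order $\Omega_f$ compatibly with Smale's partial order ($p\prec q$ iff $W^u_p\cap W^s_q\neq\emptyset$). I then build a filtration $\emptyset=M_{-1}\subset M_0\subset\dots\subset M_N=M^n$ of compact $n$-submanifolds with $f(M_i)\subset\operatorname{int}M_i$. The filtration is chosen so that each $M_i\setminus\operatorname{int}M_{i-1}$ contains exactly one periodic orbit $\mathcal O_i$, of period $m_i$ and Morse index $q_i$, and so that $M_i$ deformation retracts onto $M_{i-1}\cup W^u_{\mathcal O_i}\cap M_i$. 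For the construction I will use Lyapunov functions for $f$, as in Meyer and Franks.

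The key step is the local computation: $H_*(M_i,M_{i-1};\mathbb Z)$ is free of rank $m_i$ concentrated in degree $q_i$. By excision and the retraction above, the pair $(M_i,M_{i-1})$ is homotopy equivalent to $m_i$ disjoint copies of $(D^{q_i},\partial D^{q_i})$. These are obtained by attaching one $q_i$-handle along the unstable disc of each point of the orbit. Hence $\operatorname{rank}H_q(M_i,M_{i-1})$ summed over $i$ equals $C_q$, counting periodic points rather than orbits.

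From here the argument is standard algebra. For the filtration, the ranks of relative groups satisfy the strong Morse inequalities by subadditivity of the rank function along the long exact sequences of triples $(M_i,M_{i-1},\emptyset)$. Concretely, I will define $S_q(i)=\sum_{j\le q}(-1)^{q-j}\operatorname{rank}H_j(M_i)$ and show by induction on $i$ that it is at most the corresponding alternating sum of relative ranks. This is the usual Milnor argument using exactness truncated at degree $q$. Taking $i=N$ yields the inequalities \eqref{beti}. The equality \eqref{sum} follows because the Euler characteristic is additive along the filtration: $\chi(M_i)=\chi(M_{i-1})+\chi(M_i,M_{i-1})=\chi(M_{i-1})+(-1)^{q_i}m_i$. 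Summing gives $\sum(-1)^qC_q=\chi(M^n)$.

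The main obstacle is the existence of the filtration with the handle-retraction property; the rest is bookkeeping. I would take that property from Smale's and Franks' construction of adapted filtrations for Morse–Smale systems. The transversality condition ensures that the unstable discs attach without interference, so the deformation retraction exists.

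As a cross-check for \eqref{sum} alone, I would compute the Lefschetz number of $f^m$, where $m$ is a common multiple of all periods, in two ways. At a hyperbolic fixed point of $f^m$ of index $q$, the fixed-point index is $(-1)^q$ times a sign for orientation type. Choosing $m$ even (or $2m$) makes all orientation types positive, so the index becomes $(-1)^q$ and $L(f^{2m})=\sum_q(-1)^qC_q$. I would still need to verify that $L(f^{2m})=\chi(M^n)$, which is not automatic: it requires that $f_*$ on homology be quasi-unipotent with roots of unity that die in the power. That holds for Morse–Smale diffeomorphisms by Shub–Sullivan, which is why the filtration argument is the primary route.
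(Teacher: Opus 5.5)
Your outline is correct. It is the classical argument behind the sources the paper cites: the paper does not prove this proposition, it only attributes it to Smale and Shub--Sullivan. Your handle filtration, Milnor's subadditivity argument for the strong inequalities, and additivity of $\chi$ along the filtration are exactly that route. The one non-formal ingredient is the one you defer, and two of the reasons you give for it need correcting.

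\textbf{Lyapunov sublevel sets do not give the handle property.} Neither the filtration axioms nor sublevel sets of a Lyapunov function force it, because for maps the ranks of $H_*(M_i,M_{i-1})$ depend on the filtration chosen. Take a north--south diffeomorphism of $S^2$, let $D$ be a trapping disc of the sink, and let $D'$ be a disjoint disc with $f(D')\subset\operatorname{int}D$.
\begin{itemize}
\item $M_0=D\sqcup D'\subset M_1=S^2$ satisfies $f(M_i)\subset\operatorname{int}M_i$ and separates the sink from the source.
\item $M_0$ can even be realized as a sublevel set of a Lyapunov function.
\item Yet $\operatorname{rank}H_0(M_0)=2$ and $H_1(S^2,M_0)\cong\mathbb Z$, so this filtration counts $(2,1,1)$ instead of $(1,0,1)$.
\end{itemize}
So the filtration has to be built by hand near each $\mathcal O_i$. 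Enlarge $M_{i-1}$ by backward iterates until it contains the exit part of small hyperbolic boxes around the points of $\mathcal O_i$. Arrange that it meets the boxes only there, then add the boxes.

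\textbf{Transversality is not what makes the handles attach.} It is used only to rule out cycles, which is what gives you the ordering. The handle structure itself comes from hyperbolicity.

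\textbf{The Lefschetz cross-check.} Only your $2m$ version works. If $m$ is merely even, take a point of period $p$ with negative orientation type and $m/p$ odd. For $f^m$ it still has fixed-point index $-(-1)^q$, not $(-1)^q$.
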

	
	A compact $f$-invariant set $A\subset M^n$ is called an {\it attractor} of $f:M^n\to M^n$ if it has a compact neighborhood $U_{A}$ such that $f(U_{A})\subset {\rm int}\,(U_{A})$ and $A = \underset{k\geqslant 0}{\bigcap} f^{k}(U_{A}).$ The neighborhood $U_{A}$ is called a {\it trapping neighborhood}. A {\it repeller} is defined as an attractor for $f^{-1}$.
	
	Let $f:M^n\to M^n$ be a Morse-Smale diffeomorphism, $\tilde \Omega_f$ the set of its saddle points, and $\Sigma\subset\tilde \Omega_f$ a subset (possibly empty) such that $\left(cl(W^u_\Sigma)\setminus W^u_\Sigma\right)\subset\Omega^0_f$. Set $$A_\Sigma=\Omega^0_f\cup W^u_\Sigma,\,R_\Sigma=\Omega^n_f\cup W^s_{\tilde\Omega_f\setminus\Sigma}.$$ 
	
	\begin{propos}[\cite{GrZhuMePo}, Theorem 1.1]\label{<n-2} The set $A_\Sigma$ $(R_\Sigma)$ is an attractor (repeller) of the Morse-Smale diffeomorphism $f:M^n\to M^n$, and it is connected if the topological dimension of $R_\Sigma$ $(A_\Sigma)$ does not exceed $n-2$.
	\end{propos}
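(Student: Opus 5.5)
The statement just given is Proposition~\ref{<n-2}, quoted from \cite{GrZhuMePo}. I would prove it in two parts: first that $A_\Sigma$ is an attractor, then that it is connected under the dimension hypothesis. The repeller case follows by passing to $f^{-1}$. Under this passage $\Omega^0_f$ and $\Omega^n_f$ swap roles, and the complementary set of saddles $\tilde\Omega_f\setminus\Sigma$ plays the role of $\Sigma$.

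\textbf{Step 1: $A_\Sigma$ is closed.} By hypothesis $cl(W^u_\Sigma)\setminus W^u_\Sigma\subset\Omega^0_f$, so $A_\Sigma$ is closed, hence compact and $f$-invariant.

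\textbf{Step 2: $A_\Sigma$ is an attractor.} I would use the standard description of the dynamics of a Morse-Smale diffeomorphism. Every point lies in $W^s_p\cap W^u_q$ for some $p,q\in\Omega_f$, and there is no cycle. I would first show that $M^n\setminus R_\Sigma$ is exactly the union of the basins $W^s_p$ over $p\in A_\Sigma\cap\Omega_f$, that is, over the sinks together with the saddles of $\Sigma$. A point outside $R_\Sigma$ is not in the stable manifold of a source or of a saddle outside $\Sigma$, so its $\omega$-limit lies in $\Omega^0_f\cup\Sigma$.

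Next I would check that $A_\Sigma$ absorbs its neighbourhood. Since $A_\Sigma\cap R_\Sigma=\emptyset$, points close to $A_\Sigma$ have $\omega$-limit in $\Omega^0_f\cup\Sigma$. I would argue that they converge to $A_\Sigma$ uniformly. The tool is the $\lambda$-lemma: unstable manifolds of points of $\Sigma$ accumulate only on $W^u_\Sigma$ and on sinks, because $\Sigma$ is closed under the relation ``$W^u_\sigma$ meets $W^s_{\sigma'}$ implies $\sigma'\in\Sigma$''. This closure is forced by the hypothesis on $cl(W^u_\Sigma)$.

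The trapping neighbourhood is then built by the usual filtration construction. Take a Lyapunov function $\varphi$ for $f$ with $\varphi<1/2$ on $A_\Sigma\cap\Omega_f$ and $\varphi>1/2$ on the remaining periodic points; this is possible since there are no cycles and $A_\Sigma\cap\Omega_f$ is ``closed under $\prec$''. Set $U_{A}=\varphi^{-1}((-\infty,1/2])$. Then $f(U_A)\subset {\rm int}\, U_A$ and $\bigcap_k f^k(U_A)=A_\Sigma$.

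\textbf{Step 3: connectedness.} Suppose $\dim R_\Sigma\le n-2$. Then $R_\Sigma$ does not separate $M^n$, a classical fact from dimension theory (sets of codimension $\ge2$ do not disconnect a connected manifold). So $M^n\setminus R_\Sigma$ is connected. It equals $\bigcup_k f^{-k}({\rm int}\,U_A)$, an increasing union, so ${\rm int}\,U_A$ can be chosen connected: replace $U_A$ by the component containing a given point, and note that a disconnected trapping set would yield a disconnected basin. Finally, $A_\Sigma=\bigcap_k f^k(U_A)$ is a nested intersection of compact connected sets, hence connected.

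The main obstacle is Step 2, specifically constructing the Lyapunov function adapted to the splitting $A_\Sigma\cap\Omega_f$ versus $R_\Sigma\cap\Omega_f$. Everything else is soft topology.
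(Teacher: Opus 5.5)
Your Steps 1 and 2 are the standard route; the paper gives no proof here and simply imports the result from Grines--Zhuzhoma--Medvedev--Pochinka. The hypothesis, via the $\lambda$-lemma, makes $\Omega^0_f\cup\Sigma$ closed downstream. Smale's filtration theorem for no-cycle systems (or Meyer's energy functions) then supplies the separating Lyapunov function $\varphi$, and $f(U_A)\subset{\rm int}\,U_A$ and $\bigcap_k f^k(U_A)=A_\Sigma$ follow as you say. Two small points. First, the ``uniform convergence'' paragraph is superfluous, and as written it tacitly uses that $R_\Sigma$ is closed, which is part of what is being proved. Second, for the repeller you should not invoke the hypothesis for $f^{-1}$ with $\tilde\Omega_f\setminus\Sigma$. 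That would require ${\rm cl}(W^s_{\tilde\Omega_f\setminus\Sigma})\setminus W^s_{\tilde\Omega_f\setminus\Sigma}\subset\Omega^n_f$, which is not literally the hypothesis and needs the description of closures of stable manifolds. Instead, note that $\varphi^{-1}([1/2,\infty))$ is directly a trapping neighbourhood for $f^{-1}$ whose maximal invariant set is $R_\Sigma$.

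\textbf{The gap.} The genuine gap is in Step 3, in passing from a connected basin to a connected trapping neighbourhood. The claim that a disconnected trapping set would yield a disconnected basin is false. The set $f^{-1}({\rm int}\,U_A)$ is open and strictly contains $U_A$, so there is a small closed disk $D$ disjoint from $U_A$ with $f(D)\subset{\rm int}\,U_A$. Then $U_A\sqcup D$ is a disconnected trapping neighbourhood of the same attractor with the same basin. For the same reason, a single component of a trapping neighbourhood need not be mapped into itself, so ``the component containing a given point'' need not be trapping (here, $D$ is not).

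\textbf{A correct replacement.} The basin $W=M^n\setminus R_\Sigma$ is open and connected, hence path-connected. Cover the compact set $A_\Sigma$ by finitely many closed balls in $W$ and join them by arcs in $W$; this gives a compact connected $K$ with $A_\Sigma\subset K\subset W$. Since $W$ is the increasing union of the open sets $f^{-k}({\rm int}\,U_A)$, compactness gives $f^N(K)\subset U_A$ for some $N$. Hence $f^k(K)$ lies in any prescribed neighbourhood of $A_\Sigma$ for $k$ large, while $A_\Sigma=f^k(A_\Sigma)\subset f^k(K)$. Now suppose $A_\Sigma=A_1\sqcup A_2$ with $A_i$ nonempty and compact, and take disjoint open sets $V_i\supset A_i$. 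For large $k$ the connected set $f^k(K)$ lies in $V_1\sqcup V_2$ and meets both, a contradiction. With this repair the argument is complete.
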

	
	\subsection{Gradient-like diffeomorphisms of a surface} A Morse-Smale diffeomorphism $f:M^n\to M^n$ is called {\it gradient-like} if $W^s_{p} \cap W^u_{q}\neq \emptyset $ for distinct points $p, q \in \Omega_f$ implies $\dim\, W^u_{p}< \dim\, W^u_{q}.$ In dimension $n=2$, the set of gradient-like diffeomorphisms coincides with the set of Morse-Smale diffeomorphisms whose saddle separatrices do not intersect.
	
	Let $M^2$ be a closed connected orientable surface.
	For any homeomorphism $\phi:M^2 \rightarrow M^2$, denote by $Per_\phi$ the set of its periodic points, by $P_\phi$ the set of periods of periodic points, and by $Fix_\phi$ the set of its fixed points. From the definition of a periodic homeomorphism $\phi$ of order $m_\phi$, it follows that $P_\phi$ is finite and consists of divisors of $m_\phi$. Set $\tilde P_\phi=P_\phi \backslash\{m_\phi\}$. For any $l\in\tilde P_\phi$, denote by $B_\phi^l$ the set of points of period $l$. Set $B_\phi=\bigsqcup\limits_{l\in \tilde P_\phi} B_\phi^l$. 
	
	\begin{propos}[\cite{Nielsen37}, \cite{yoko}]\label{data}
		Let $\phi :M^2 \rightarrow M^2$ be an orientation-preserving periodic homeomorphism of order $m_\phi$. Then $B_\phi^l$ is finite for any $l\in \tilde P_{\phi}$.	
	\end{propos}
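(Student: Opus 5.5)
The statement to prove is Proposition \ref{data}: for an orientation-preserving periodic homeomorphism $\phi$ of order $m_\phi$, each set $B_\phi^l$ with $l\in\tilde P_\phi$ is finite. Since $B_\phi^l\subset Fix_{\phi^l}$ and $\phi^l$ is again periodic, of order $m_\phi/l>1$, it suffices to show one thing. If $\psi\neq id$ is an orientation-preserving periodic homeomorphism of a closed surface, then $Fix_\psi$ is finite.

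The plan is to reduce to a local statement and then use compactness. Suppose $Fix_\psi$ is infinite. By compactness it has an accumulation point $x_0$, which lies in $Fix_\psi$ because that set is closed. The aim is to show that every fixed point of $\psi$ has a neighborhood containing no other fixed points, which gives a contradiction.

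For the local statement I would invoke the classical theorem of Kerékjártó and Eilenberg: a periodic homeomorphism of a surface is topologically conjugate, near any fixed point, to a linear map of $\mathbb R^2$. Concretely, one averages a metric over the finite cyclic group $\langle\psi\rangle$. Then $\psi$ acts by isometries of the invariant metric, and near a fixed point it is conjugate to an element of $O(2)$. Since $\psi$ preserves orientation, this element is a rotation by $2\pi j/m$.

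The main obstacle is showing the rotation angle is nonzero. If the local rotation were trivial, $\psi$ would be the identity on a neighborhood of $x_0$. I would then use the Newman theorem: a nontrivial periodic homeomorphism of a connected manifold cannot fix a nonempty open set. One can also see this directly, since the set where $\psi$ is locally the identity is open, and by the local linear model it is also closed. By connectedness of $M^2$ this forces $\psi=id$, a contradiction. Hence each fixed point is the center of a nontrivial rotation, so it is isolated in $Fix_\psi$. Therefore $x_0$ cannot be an accumulation point, so $Fix_\psi$ is finite, and so is $B_\phi^l$.

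Orientation preservation is genuinely needed. Without it a reflection could fix whole curves, which is why the proposition is stated only for orientation-preserving $\phi$.
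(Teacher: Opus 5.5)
Your argument is correct and is the standard reasoning behind this classical fact. The paper gives no proof of its own; it only cites Nielsen and Yokoyama. Your steps all go through: the reduction to $Fix_{\phi^l}$ with $l\mid m_\phi$ and $l<m_\phi$, the local rotation model at fixed points, the open--closed argument excluding a trivial local rotation, and the final compactness step.

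One caveat concerns the sentence ``Concretely, one averages a metric\dots''. That justification is valid only for smooth periodic maps, where you average a Riemannian metric and linearize at the fixed point by the exponential map. Here $\phi$ is merely a homeomorphism, as is $\phi_f$ in Proposition \ref{mn}. Averaging then produces only an invariant distance function, and this carries no information: every periodic homeomorphism is an isometry of some compatible metric, and an isometry of a mere metric gives no linear chart. The local rotation model must come from Ker\'ekj\'art\'o's theorem itself. That theorem supplies arbitrarily small invariant closed disk neighbourhoods of a fixed point, together with the fact that a periodic homeomorphism of the disk is conjugate to an orthogonal map. Since you invoke the theorem by name, the proof stands. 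You should drop the averaging sentence or restrict it explicitly to the smooth case.
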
	
	
	\begin{propos}[\cite{treh}]\label{mn} Let $f:M^2 \rightarrow M^2$ be an orientation-preserving gradient-like diffeomorphism. Then there exist an orientation-preserving periodic homeomorphism $\phi_f:M^2 \rightarrow M^2$ of order $m_{f}$ and a flow $\xi^t_f:M^2 \rightarrow M^2$ equivalent to the gradient flow of a Morse-Smale function such that $\phi_f\xi^t_f=\xi^t_f\phi_f$ and $f=\phi_f\xi^1_f$. Moreover,
		\begin{enumerate}
			\item[1)] $B_{\phi_f}\subset\Omega_f=Fix_{\xi^1_f}$ and the invariant manifolds of the periodic points of $f$ coincide with the invariant manifolds of the fixed points of the flow $\xi^t_f$; 
			\item[2)] $f|_{\Omega_f}=\phi_f|_{\Omega_f}$ and $\tilde P_{\phi_f} \subset P_f \subset P_{\phi_f}$;
			\item[3)] $m_f$ is the smallest natural number $m$ such that $\Omega_f$ consists of fixed points of $f^m$ and all saddle points have positive orientation type; 
			\item[4)] the period of any saddle separatrix equals $m_f$.
		\end{enumerate}
	\end{propos}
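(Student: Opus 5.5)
The plan is to build the flow first and then recover the periodic homeomorphism from it. Concretely, set $m=m_f$ to be the smallest natural number such that $\Omega_f\subset Fix_{f^m}$ and every saddle point of $f$ has positive orientation type with respect to $f^m$; this $m$ exists because $\Omega_f$ is finite. I will construct a flow $\xi^t_f$ with three properties: it commutes with $f$, it satisfies $\xi^m_f=f^m$, and its fixed points and invariant manifolds are exactly those of the periodic points of $f$. Then $\phi_f:=f\xi^{-1}_f$ is a homeomorphism commuting with $\xi^t_f$, and
$$\phi_f^m=f^m\xi^{-m}_f=id,$$
so $\phi_f$ is periodic. Orientation preservation is automatic, since $\xi^{-1}_f$ is isotopic to the identity and $f$ preserves orientation.

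\textbf{Step 1 (local models).} Every point of $\Omega_f$ is fixed by $g=f^m$, every saddle of $g$ has positive orientation type, and $f$ permutes the points of $\Omega_f$ together with their invariant manifolds.
\begin{itemize}
\item At each sink (source) $p$ of period $k$, I conjugate $f^k$ near $p$ topologically to a linear contraction (expansion). I then choose a circle $S_p$ around $p$ bounding a disc $D_p$ with $f^m(D_p)\subset \mathrm{int}\, D_p$.
\item I take these discs $f$-equivariantly: $D_{f(p)}=f(D_p)$ along each orbit, and at $p$ itself $D_p$ is chosen invariant under $f^k$ in the linear model.
\item In the annulus $D_p\setminus \mathrm{int}\,f^m(D_p)$ I define the flow as the radial flow of the linear model, normalized so that the time-$m$ map is $f^m$ and the flow commutes with $f^k$. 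This works because in the linear model $f^k$ is a rotation composed with a homothety.
\item Near each saddle I do the same in a linearizing neighbourhood in which $f^k$ is linear hyperbolic. There $f^m$ is the time-$m$ map of the linear hyperbolic flow; here positive orientation type is used, since it makes the eigenvalues of $f^m$ positive.
\end{itemize}

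\textbf{Step 2 (global gluing).} This is the main obstacle. The wandering set $M^2\setminus\Omega_f$ is foliated by fundamental domains of $g$, and $f$ permutes these domains. I would organize the construction through the attractor-repeller filtration of Proposition \ref{<n-2}, taking the attractor $A=\Omega^0_f\cup W^u_{\Omega^1_f}$ and the repeller $R=\Omega^2_f$ (with $\Sigma=\Omega^1_f$, which is admissible because the diffeomorphism is gradient-like).
\begin{itemize}
\item First, I extend the flow from the sink discs and saddle neighbourhoods to a trapping neighbourhood of $A$. This is done by extending along the unstable separatrices, using the fundamental domains of $g$ on them.
\item Next, I choose an $f$-invariant family of closed curves (the boundary of a trapping neighbourhood of $A$) that are transverse to all $g$-orbits. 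The complement of the two invariant pieces is a union of annuli that are fundamental domains of $g$, each a product of a curve with $[0,m]$. On these annuli I define $\xi^t_f$ as the product flow.
\item Equivariance is achieved by constructing everything on one representative of each $f$-orbit of pieces and transporting it by powers of $f$. The stabilizer of each piece acts by a finite-order map; on the sink side it acts by rotations, which commute with the radial flow, and on the saddle side the hyperbolic linear model is respected.
\end{itemize}
The flow obtained is equivalent to the gradient flow of a Morse--Smale function: its non-wandering set is hyperbolic-type, its separatrices coincide with those of $f$, and there are no saddle connections.

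\textbf{Step 3 (properties).}
\begin{itemize}
\item By construction $Fix_{\xi^1_f}=\Omega_f$ and the invariant manifolds agree, which gives the second half of 1). Since $\xi^t_f$ fixes $\Omega_f$ pointwise, $f|_{\Omega_f}=\phi_f|_{\Omega_f}$, which is the first part of 2).
\item If a wandering point $x$ satisfied $\phi_f^k(x)=x$ with $0<k<m$, then $f^k(x)=\xi^k_f(x)$ would lie on the $\xi^t_f$-trajectory of $x$. Suppose $x$ lies on a separatrix; then $f^k$ maps that separatrix to itself. If $x$ lies in the basin of a sink or source, the sink (source) model forces $f^k$ to act near the point as a trivial rotation. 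Either case contradicts the minimality of $m$ combined with 4).
\item Property 4) itself follows from minimality as well. If $f^j$ fixed a separatrix for some $j<m$ with $j\mid m$, then $f^j$ would fix its endpoint and preserve its orientation there, and hence would satisfy the defining condition of $m$.
\item Thus $B_{\phi_f}\subset\Omega_f$, completing 1). Then $\tilde P_{\phi_f}\subset P_f$ because $f=\phi_f$ on $\Omega_f$, and $P_f\subset P_{\phi_f}$ for the same reason.
\item The order of $\phi_f$ is exactly $m$, since $\phi_f^j=id$ for some $j<m$ would make $f^j=\xi^j_f$ fix $\Omega_f$ with all saddles of positive type. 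This gives 3).
\end{itemize}
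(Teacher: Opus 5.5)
Your reduction is sound: once you have a flow $\xi^t_f$ commuting with $f$ with $\xi^m_f=f^m$, the map $\phi_f=f\xi^{-1}_f$ is an orientation-preserving homeomorphism with $\phi_f^m=id$. Its order is exactly $m$ by minimality, and 2) follows once $B_{\phi_f}\subset\Omega_f$ is known. The paper does not prove this proposition (it imports it), so your proposal has to stand on its own.

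\textbf{Step 2 does not construct the flow.} The existence of that flow is the whole content of the proposition, and Step 2 does not establish it. The concrete recipe cannot work as written. The boundary of a trapping neighbourhood $U$ of $A$ is never $f$-invariant, since $f(U)\subset\mathrm{int}\,U$; the wandering set contains no compact $f$-invariant set. For the same reason the discs $D_{f(p)}=f(D_p)$ of Step 1 do not close up along the orbit of a sink of period $k$: $f^k(D_p)=D_p$ would contradict $f^m(D_p)\subset\mathrm{int}\,D_p$. What you actually need is invariance under $f\xi^{-1}_f$, i.e. under the periodic map you have not yet built, so the equivariance must be organised differently.

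The genuinely delicate points are also skipped:
\begin{itemize}
\item A flow on a separatrix with prescribed time-$m$ map is far from unique. The parametrisations induced along each unstable separatrix by the saddle chart and by the sink chart therefore have to be matched.
\item The chart at a sink of period $k$ must send the incoming separatrices to rays. It must also use a rotation angle $2\pi jk/m$ compatible with the cyclic permutation of these separatrices by $f^k$.
\item All of these choices must be made $f$-equivariantly.
\end{itemize}
One way to supply this is to work in orbit spaces. $(W^s_\omega\setminus\omega)/f^m$ is a torus on which $f^k$ induces a free map of finite order. A flow there with $\xi^m_f=f^m$ commuting with $f$ amounts to an invariant foliation of this torus by circles in the class of the $f^m$-loop, containing the images of the separatrices. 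Such a foliation can be built on the quotient by the finite group and lifted; the saddle charts still have to be glued in afterwards.

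\textbf{Step 3 has a second, independent gap.} Your derivation of 4) is a non sequitur. If $f^j$ maps one separatrix to itself, you only learn that $f^j$ fixes one saddle together with its separatrices, not that it fixes all of $\Omega_f$, so the defining condition of $m$ is never reached. The argument for $B_{\phi_f}\subset\Omega_f$ rests either on 4) or on $f^k$ acting as a trivial rotation near a single sink, which is again local information.

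Item 4) is global, and you need a global rigidity fact. One such fact is that a nontrivial orientation-preserving periodic homeomorphism of a surface has only isolated fixed points (Ker\'ekj\'art\'o, Brouwer). With it:
\begin{itemize}
\item If $\phi_f^k(x)=x$ at a wandering point $x$, commutation forces $\phi_f^k$ to fix the whole $\xi_f$-trajectory of $x$. Hence $\phi_f^k=id$ and $m\mid k$.
\item If $f^j(\ell)=\ell$ for a separatrix $\ell$, then $\phi_f^j(\ell)=\ell$ and $\phi^j_f|_\ell$ commutes with the free flow on $\ell$, so it equals $\xi^c_f|_\ell$ for some $c$. Periodicity forces $c=0$, so $\phi^j_f=id$ and $m\mid j$.
\end{itemize}
Since positive orientation type makes $f^m$ fix every separatrix, 4) follows.

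Alternatively, without using $\phi_f$: an orientation-preserving map fixing one separatrix at a saddle or a sink fixes all separatrices there, because it preserves their cyclic order. This propagates through the connected sets $\Omega^0_f\cup W^u_{\Omega^1_f}$ and $\Omega^2_f\cup W^s_{\Omega^1_f}$ until all of $\Omega_f$ is fixed with positive orientation type.
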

	
	\begin{propos}[\cite{np2021}]\label{sad-no}
		Let $f:M^2\to M^2$ be a gradient-like diffeomorphism, and suppose there exist pairwise disjoint closed 2-disks $$D,f(D),\dots,f^{m-1}(D),\,m\in\mathbb N$$ such that $f^m(D)\subset {\rm int}\, D$, and $f^m$ has a fixed sink in $D$. Then there exists a stable arc connecting $f$ to a gradient-like diffeomorphism $\tilde f$ that coincides with $f$ outside the disks, and $\tilde f^m$ has exactly one non-wandering point in $D$ — a fixed sink.
	\end{propos}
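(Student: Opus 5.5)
The plan is to argue by induction on the number $N$ of non-wandering points of $f^m$ lying in $D$, and to lower $N$ by generic saddle-node bifurcations (and, where needed, flip bifurcations), all supported inside $D$.

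\emph{Reduction to a disk map.} Every arc will have the form $f_t=f$ outside $f^{m-1}(D)$ and $f_t=g_t\circ f$ on $f^{m-1}(D)$. Here $g_t$ is an isotopy of diffeomorphisms supported in a compact subset of ${\rm int}\,D$ with $g_0=id$. Then $f_t^m|_D=g_t\circ f^m|_D$, and the orbits of $f_t$ outside $\bigcup_k f^k(D)$ are unchanged. Moreover, since $f^m(D)\subset{\rm int}\,D$, every non-wandering point of $f_t$ in $\bigcup_k f^k(D)$ has an orbit meeting $D$. So it suffices to deform the map $F=f^m|_D:D\to{\rm int}\,D$.

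\emph{Structure of $F$ on $D$.} Since $D$ is a trapping disk, the non-wandering set of $F$ in $D$ is finite and hyperbolic, and the separatrices of $F$ do not intersect. Applying Proposition~\ref{lefschetz} to the trapping region, or a direct Morse-inequality argument on the disk, gives
$$C_0-C_1+C_2=\chi(D)=1$$
for the points of $\Omega_F$ in $D$. Hence, if $N>1$, there is at least one saddle.

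\emph{Induction step.} Choose the pair to cancel as follows.
\begin{itemize}
\item If some saddle $\sigma$ has positive orientation type and an unstable separatrix $\ell$ ending at a sink $\omega$ that is not the endpoint of its other unstable separatrix, then $cl(\ell)$ is an arc. Take a small disk $U\subset{\rm int}\,D$ around $cl(\ell)$ disjoint from the other separatrices. A standard local model (as in \cite{np2021}) gives a generically unfolded saddle-node arc $g_t$ supported in $U$ that merges $\sigma$ and $\omega$ into a non-critical saddle-node and then removes them. After the bifurcation the other separatrices of $\sigma$ land in the remaining sink, so the map stays gradient-like and $N$ drops by $2$.
\item Dually, apply the same construction to $F^{-1}$ to cancel a saddle with a source.
\item If a saddle has negative orientation type, its two unstable separatrices go to the same sink $\omega$. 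In this case first perform an inverse flip: the pair consisting of the negative-type saddle and the period-$2$ orbit of $F$ is modified. Alternatively, one can note that the $F$-invariant arc $cl(W^u_\sigma)$ lets one do a period-doubling merging $\sigma$ into a sink of negative type. Either way $N$ decreases.
\end{itemize}
At each bifurcation moment there is a single non-hyperbolic orbit, no cycles (the system is gradient-like), and transversality holds. So conditions 1)--4) of \cite{NPT} are satisfied and the concatenated arc is stable. When $N=1$, the Euler relation forces the single point to be a sink, which gives $\tilde f$.

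\emph{Main obstacle.} I expect the main difficulty to be the combinatorial claim that, whenever $N>1$, a cancellable pair of one of the above types actually exists. I would try to prove this first by considering the graph whose vertices are the sinks and whose edges are the closures of unstable saddle separatrices. This graph is an $F$-invariant compact connected set; its connectedness follows from Proposition~\ref{<n-2} applied to $F|_D$. Its complement in $D$ consists of the basins of the sources together with an annulus at $\partial D$. If every positive saddle had both separatrices ending at one sink, every edge would be a loop. Each loop bounds a disk that must contain a source. An Euler-characteristic count, with $C_0-C_1+C_2=1$ and the graph being connected, then forces a saddle–source cancellation through $F^{-1}$. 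Negative saddles are absorbed by the flip step described above.
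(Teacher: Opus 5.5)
The paper does not prove this statement; it quotes it from the sphere paper. So your argument has to stand on its own, and its induction step (the part you yourself flag) has a genuine gap: you never take the $F$-periods of the points into account.

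A bifurcation of $f_t=g_t\circ f$ at a point of $F$-period $k$ is a bifurcation of a whole orbit. So your local saddle-node in $U\supset{\rm cl}(\ell)$ is legitimate only if three things hold:
\begin{itemize}
\item $\sigma$ and $\omega$ have the same $F$-period $k$;
\item $U,F(U),\dots,F^{k-1}(U)$ are pairwise disjoint;
\item the other unstable separatrix of $\sigma$ avoids the whole $F$-orbit of $\omega$, not just $\omega$.
\end{itemize}
If the last condition fails, the saddle-node orbit is connected to itself at the bifurcation moment. That is a cycle, violating condition 2).

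Your first bullet breaks these requirements in a configuration compatible with the hypotheses. Around the fixed sink $c$ put a necklace of saddles $\sigma_i=F^i(\sigma)$ and sinks $\omega_i=F^i(\omega)$, $i\in\mathbb Z_n$, $n\geq 2$, with $W^u_{\sigma_i}$ ending at $\omega_i$ and $\omega_{i+1}$. Inside it put saddles $\tau_i$ whose unstable separatrices end at $c$ and $\omega_{i+1}$ and whose stable ones come from sources $\alpha_i,\alpha_{i+1}$. Your rule accepts $(\tau_i,c)$, which have different periods and cannot be merged. It also accepts $(\sigma_0,\omega_0)$, which produces a cycle.

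The negative-saddle bullet is also wrong. Suppose both unstable separatrices of a saddle of $F$-period $k$ and negative type ended at one sink. Then ${\rm cl}\,W^u_\sigma$ would be an $F^k$-invariant circle bounding a disk, not an arc as your next sentence assumes. $F^k$ reverses its orientation, which is impossible for orientation-preserving $F$ by the argument of item 2) of Lemma~\ref{g10}. An inverse flip instead needs the two separatrices to end at a sink orbit of period $2k$.

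The missing ingredient is the structure given by Proposition~\ref{mn} (in the orientation-preserving setting where the paper uses the statement). Write $F=\phi_f^m\xi_f^m$, and let $c$ be the given fixed sink.
\begin{itemize}
\item The basin $B=\bigcup_{k\geq 0}F^{-k}({\rm int}\,D)$ is an open disk made of stable manifolds of non-wandering points in $D$. Hence it is $\xi_f$-invariant, therefore $\phi_f^m$-invariant, and $\phi_f^m|_B$ is conjugate to a rotation of some order $n$.
\item Consequently every non-wandering point of $F$ in $D$ other than $c$ has $F$-period exactly $n$, and $c$ is the centre when $n\geq 2$.
\item Moreover $F^n=\xi_f^{mn}$ on $B$. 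So all saddles have positive type and no flip is ever needed.
\end{itemize}

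Now induct on $F$-orbits, equivalently in the quotient $B/\phi_f^m$, where for $n\geq2$ the point $c$ is a cone point that is never cancelled. Use only pairs of period $n$ satisfying the orbit condition above. Such a pair exists whenever a saddle exists:
\begin{itemize}
\item If some saddle has unstable separatrices ending on two different sink orbits, one of them is not $c$, and that pair works.
\item Otherwise every edge of the quotient graph is a loop. Its connectedness leaves $c$ as the only sink, so the saddles form petals at $c$.
\item An innermost petal then contains exactly one source, and it cancels with its saddle through $f^{-1}$. The outer stable separatrix cannot come from that source's orbit, because the other points of the orbit lie inside the other innermost petals.
\end{itemize}
This replaces your unexplained Euler-characteristic count.

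Minor points:
\begin{itemize}
\item Take ${\rm supp}\,g_t\subset{\rm int}\,f^m(D)$. This is possible because the relevant separatrices lie in $F^2(D)$, and it makes your formula a diffeomorphism with $f_t=f$ outside $f^{m-1}(D)$.
\item At $N=1$ it is the trapping property, not the Euler relation, that excludes a source.
\item Each step lowers $N$ by $2n$, not by $2$.
\end{itemize}
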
	
	
	\section{Stable connectivity components of the sets $G_1,G_2$}
	The fact that any two diffeomorphisms in $G_1$ are connected by a stable arc is established in \cite[Theorem 1]{NoPoNo}. The phase portrait of a diffeomorphism in $G_1$ to which all other diffeomorphisms in this set are connected by a stable arc is shown in Figure \ref{modelG1}.  
	\begin{figure}[h]		
		\centerline{\includegraphics[width=8 cm]{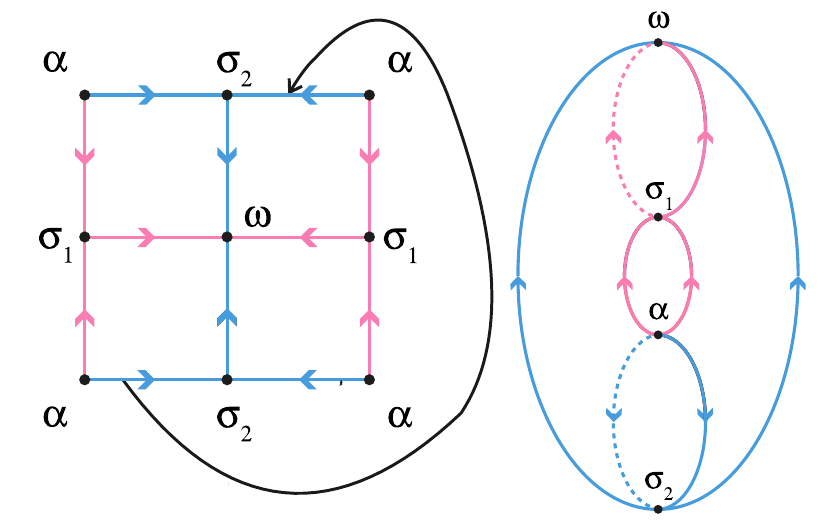}}		
		\caption{\small Phase portrait of a diffeomorphism in $G_{1}$}\label{modelG1}	
	\end{figure}
	
	The existence of four stable connectivity components
	$G_2^0=\langle 1_0,1_0,1_0\rangle$, $G_2^1=\langle 1_0,1_0,1_2\rangle$,
	$G_2^2=\langle 1_0,1_2,1_2\rangle$,
	$G_2^3=\langle 1_2,1_2,1_2\rangle$ in $G_2$ is established in \cite{BaPo25}. Examples of phase portraits of diffeomorphisms $g_{i}\in G^i_2,\,i\in\{0,1,2,3\}$ to which all other diffeomorphisms in $G^i_2$ are connected by a stable arc are shown in Figure \ref{modelG2}. 
	\begin{figure}[H]
		\begin{minipage}[h]{0.47\linewidth}
			\center{\includegraphics[width=1\linewidth]{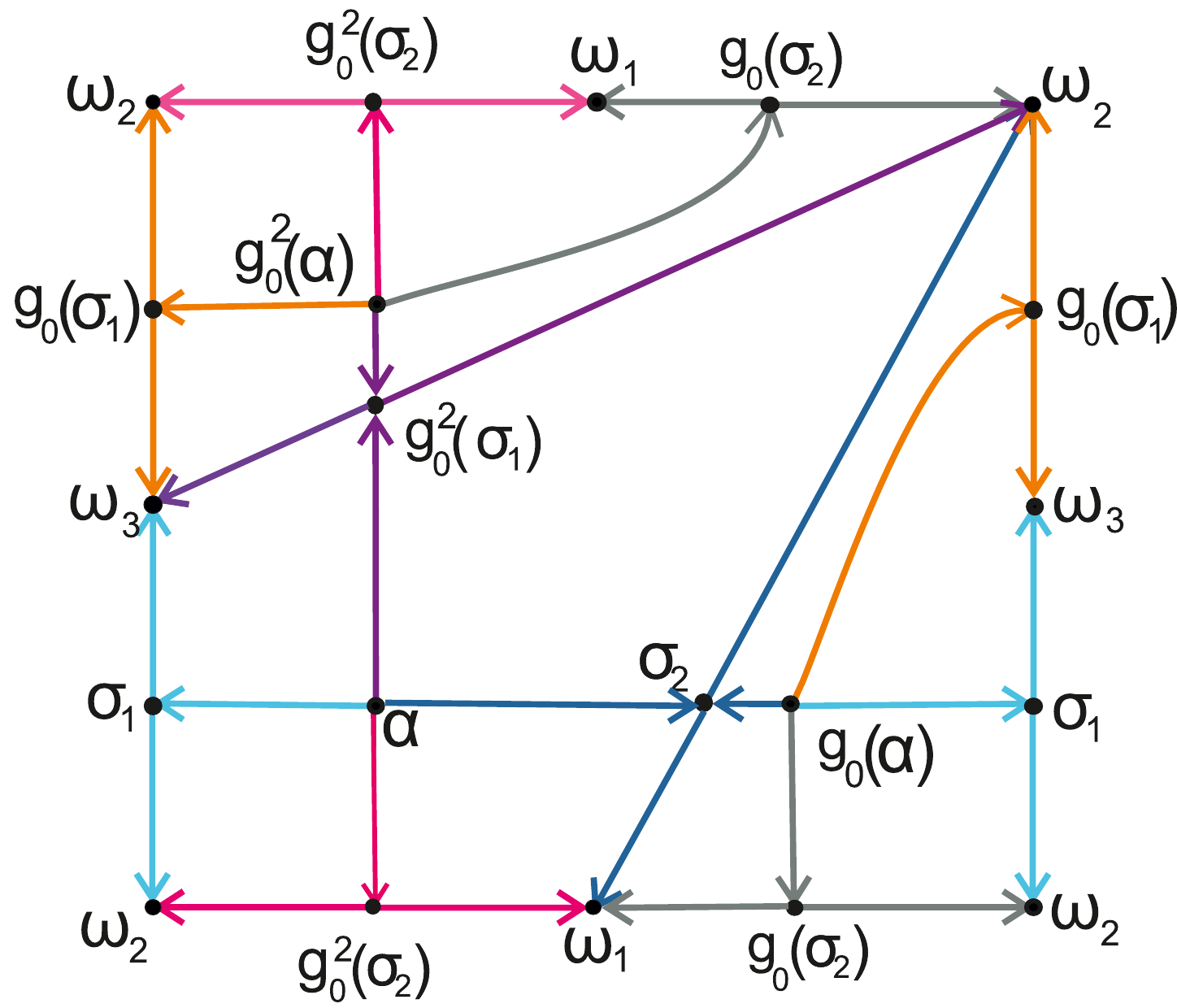}} (a) \\
		\end{minipage}
		\hfill
		\begin{minipage}[h]{0.47\linewidth}
			\center{\includegraphics[width=1\linewidth]{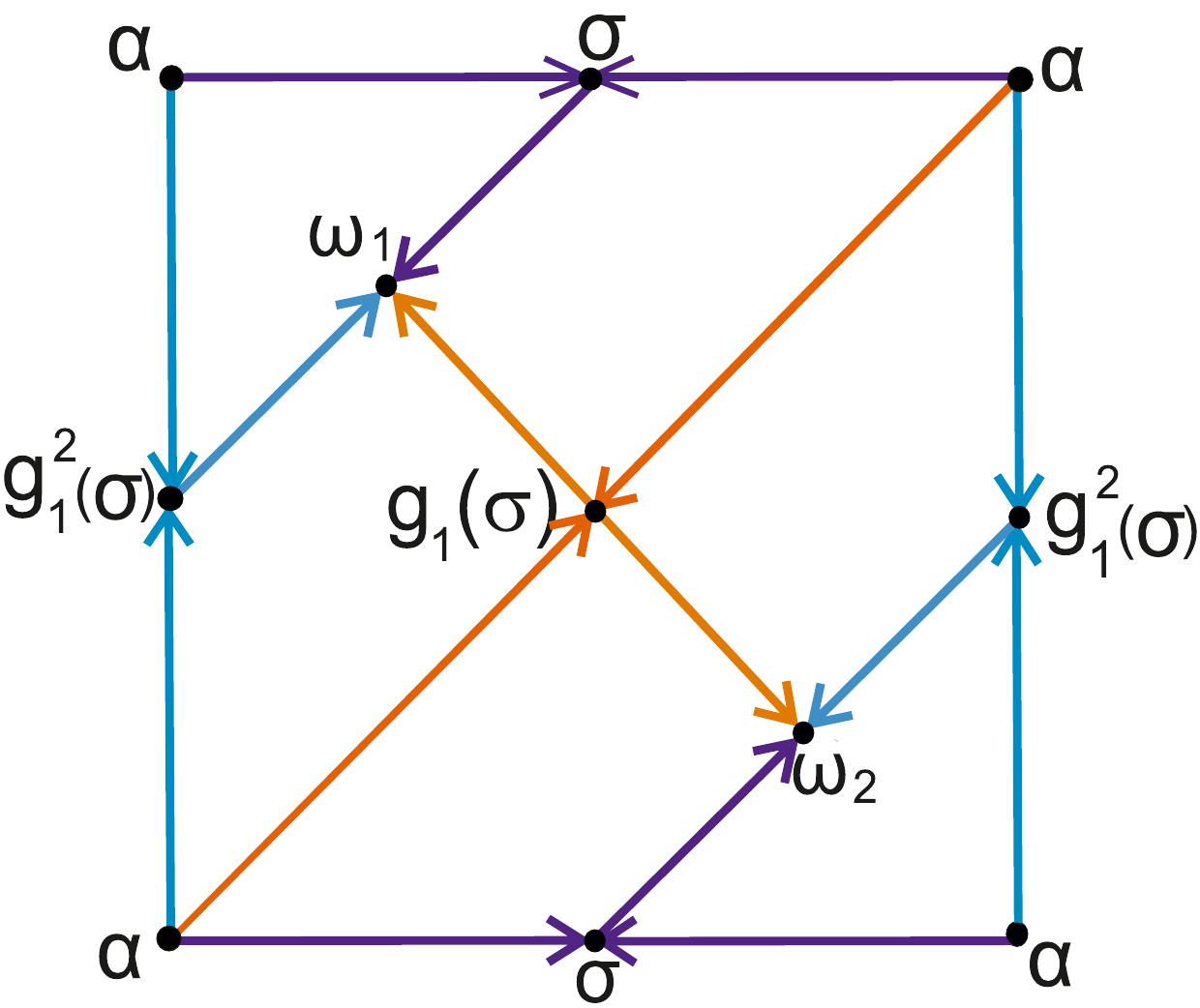}} \\(b)
		\end{minipage}
		\vfill
		\begin{minipage}[h]{0.47\linewidth}
			\center{\includegraphics[width=1\linewidth]{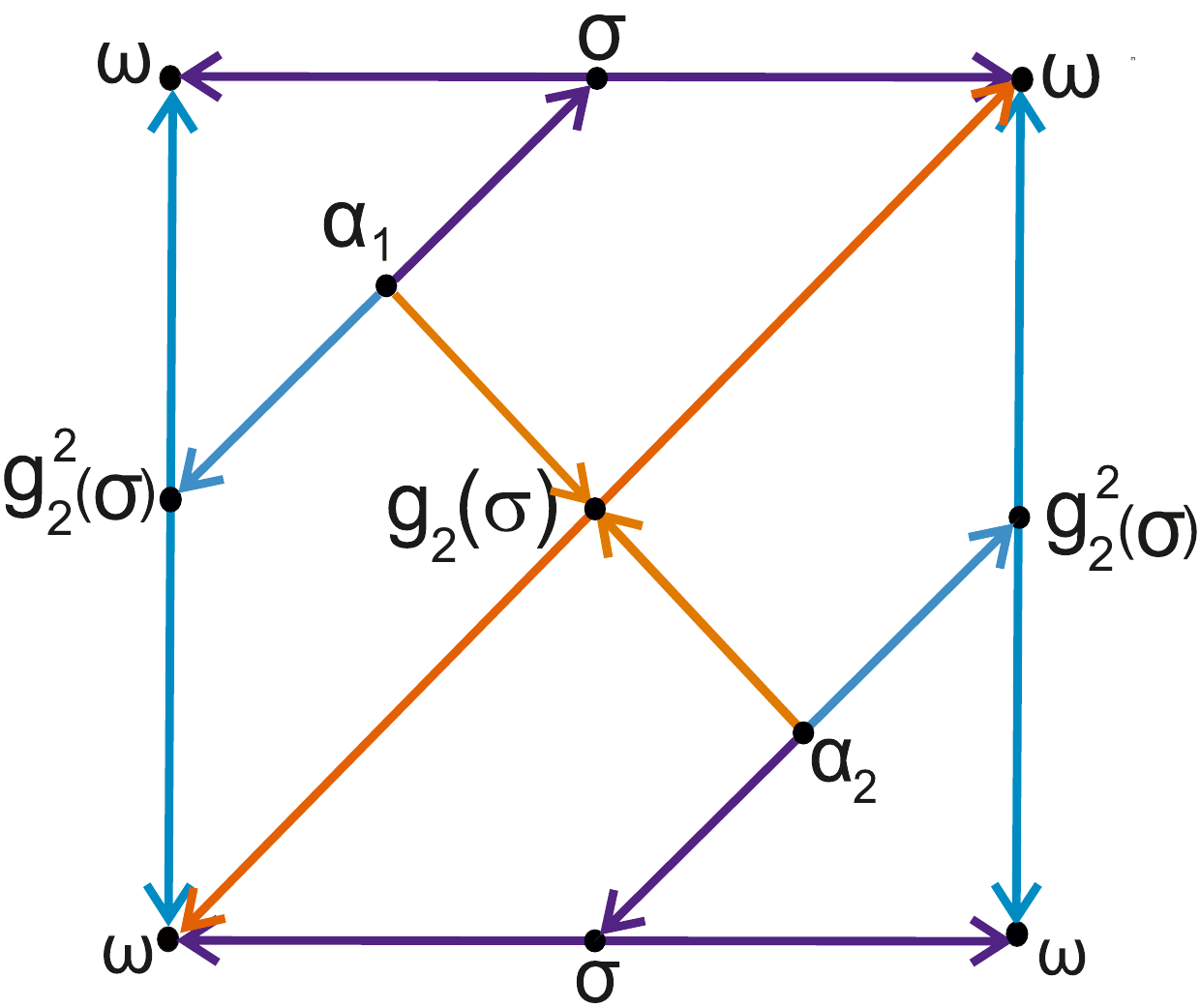}} (c) \\
		\end{minipage}
		\hfill
		\begin{minipage}[h]{0.47\linewidth}
			\center{\includegraphics[width=1\linewidth]{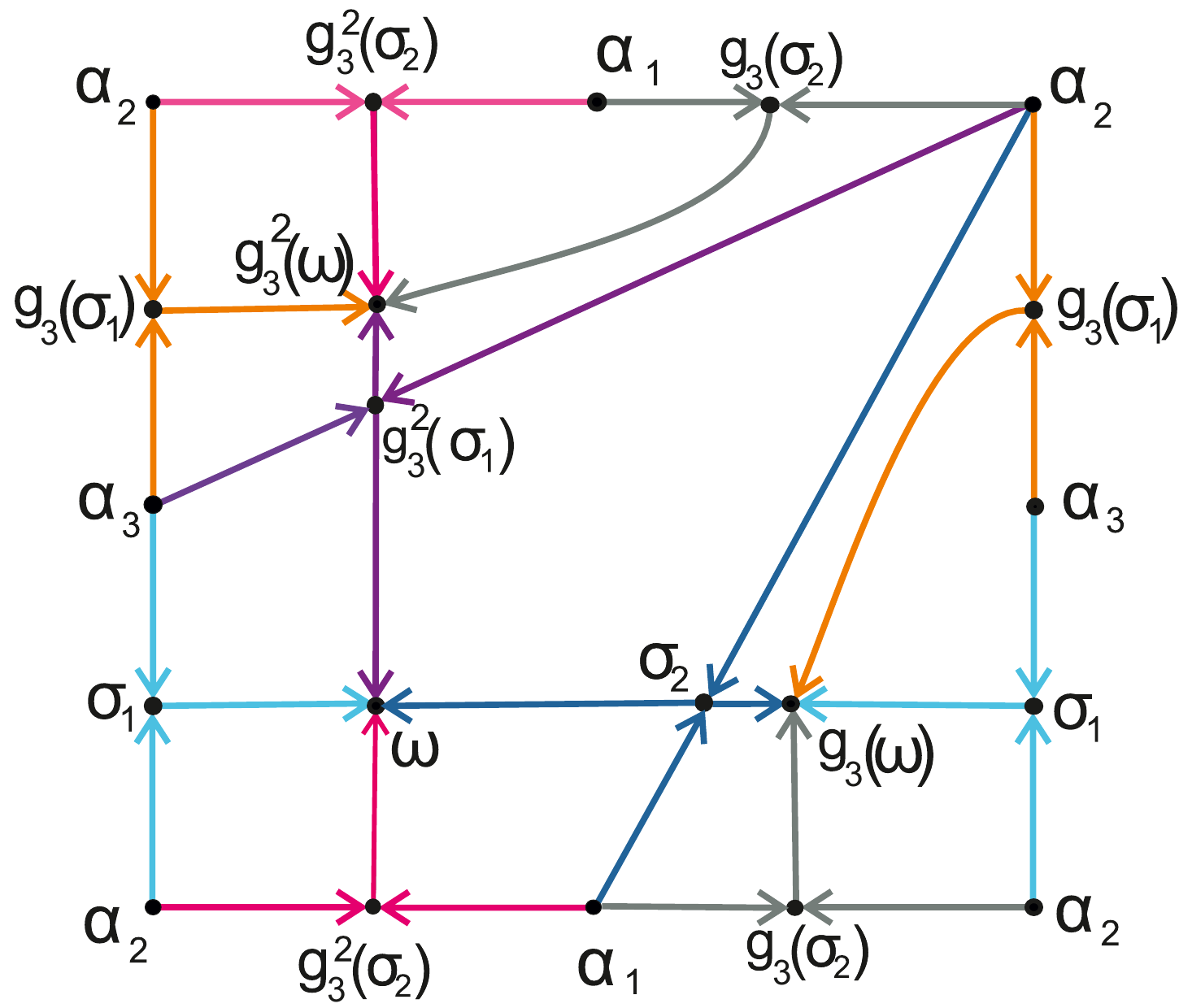}} (d) \\
		\end{minipage}
		\caption{Phase portraits of diffeomorphisms: (a) $g_{0}\in G^0_2$, (b) $g_{1}\in G_2^1$, (c) $g_{2}\in G_2^2$, (d) $g_{3}\in G_2^3$}
		\label{modelG2}
	\end{figure}
	
	Thus, it remains to prove items 3) and 4) of Theorem \ref{C_i}. The proofs in both cases consist of three parts:
	
	I) description of the dynamics of the {\it simplest} diffeomorphism in $G_j^i,\,j\in\{3,4\},\,i\in\{0,\dots, r_j-1\}$ denoted by $g_{i}$, defined by the condition 
	\begin{equation}\label{min}
		|\Omega_{g_{i}}|=\min_{g\in G_j^i}|\Omega_g|;
	\end{equation}
	
	II) proof of the absence of a stable arc connecting the simplest diffeomorphisms $g_i,g_{\tilde i}\in G^i_j$ for $i\neq\tilde i$;
	
	III) construction of a stable arc connecting an arbitrary diffeomorphism $g\in G_j^i$ to the simplest diffeomorphism $g_i \in G_j^i$. 
	
	\section{Dynamics of the simplest diffeomorphisms}
	In this section, we describe the dynamics of the simplest diffeomorphism in each set $G_j^i,\,j\in\{3,4\},\,i\in\{0,\dots, r_j-1\}$. Due to the similarity of the construction ideas in all cases, we will give a comprehensive proof for the diffeomorphism $g_{2}\in G_4^2$ and only present the phase portraits for the remaining simplest diffeomorphisms. 
	
	\begin{lemma}\label{g10} The simplest diffeomorphism $g_{2}\in G_4^2$ has the following properties (see Fig. \ref{picg1}):
		\begin{enumerate}
			\item[1)] $\Omega_{g_{2}}$ consists of one fixed sink $\omega$, one source orbit $\alpha,g_{2}(\alpha)$ of period $2$, and one saddle orbit $\sigma,g_{2}(\sigma),g_{2}^2(\sigma)$ of period $3$;
			\item[2)] the set $K=W^u_\sigma\cup\omega$ is a non-contractible knot on the torus $\mathbb T^2$;
			\item[3)] the knots $K,g_{2}(K), g_{2}^2(K)$ have homotopy types $\pm\langle 1,0\rangle$, $\pm\langle 0,-1\rangle$, $\mp\langle 1,1\rangle$. 
		\end{enumerate}
		Any two simplest diffeomorphisms in $G_4^2$ are topologically conjugate.	
	\end{lemma}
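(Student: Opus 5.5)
We first pin down $\Omega_{g_2}$. Take $g\in G_4^2=\bigsqcup_{i=0}^2\langle 3_i,2_2,1_0\rangle$. By Proposition \ref{mn} and the characteristic $6\,(3,2,1)$ of $\widehat A_4$, the periodic orbits of $g$ all have period $6$ except three orbits: a fixed sink, a source orbit of period $2$, and an orbit of period $3$ of some Morse index $i$. We count points. The three exceptional orbits contribute $1$ sink, $2$ sources, and $3$ points of index $i$. Every other orbit contributes $6$ points. The Lefschetz relation (\ref{sum}) with $\chi(\mathbb T^2)=0$ gives $C_0-C_1+C_2=0$.

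If there were no saddles, then $C_1=0$ would force $C_0+C_2=0$, which is impossible. So we need $C_1\ge 3$. Taking the period-$3$ orbit to be a saddle orbit ($i=1$, allowed since $3\not<6/2$) and adding no period-$6$ orbits gives $C_0=1$, $C_1=3$, $C_2=2$. This satisfies $1-3+2=0$ and the inequalities (\ref{beti}), with $C_1-C_0=2=\beta_1-\beta_0$. Any other admissible configuration adds at least $6$ points, so $|\Omega|=6$ is the minimum provided we exhibit such a diffeomorphism. We construct it by a commuting pair as in Proposition \ref{mn}: take $\phi=\widehat A_4$ and a gradient-like flow $\xi^t$ commuting with $\widehat A_4$. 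Its fixed points should be the fixed point of $\widehat A_4$ (a sink), the period-$2$ orbit (sources), and the period-$3$ orbit (saddles). To build $\xi^t$, we take an $\widehat A_4$-invariant Morse function, for example a symmetrized height function on the hexagonal torus. Then $g_2=\widehat A_4\xi^1$ has property 1).

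For 2) and 3) we work with an arbitrary simplest $g$ having this $\Omega$. The saddle $\sigma$ is fixed by $g^3$, and by item 4) of Proposition \ref{mn} its separatrices have period $6$. Hence $g^3$ swaps the two unstable separatrices of $\sigma$. Each separatrix must end at the unique sink $\omega$, so $K=W^u_\sigma\cup\omega$ is a closed curve. By Proposition \ref{<n-2}, $A=\omega\cup W^u_{\{\sigma,g(\sigma),g^2(\sigma)\}}$ is a connected attractor and its complement is the two sources' basins. If $K$ were contractible, it would bound a disk. That disk contains a source, and taking the images under $g$ we would get too many sources or overlapping disks. Hence $K$ is essential. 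More directly: $A$ is a wedge of three circles through $\omega$, and its complement is two open disks. If some $K$ were trivial in homology, the complement would have a disk component bounded by $K$ alone, and the Euler characteristic count then fails. So $K$ is non-contractible, and simple since it is an embedded circle.

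For 3), the homotopy types of $g(K)$ and $g^2(K)$ are $A_4$ applied to that of $K$ (in the row-vector convention of $\widehat B$). The three classes are pairwise distinct, because $K$ is not fixed by $g$ as the saddles differ. They meet only at $\omega$, and each pair of classes must have intersection number $\pm1$, since two essential simple closed curves meeting transversally in a single point have determinant $\pm1$. Let $K\sim\langle a,b\rangle$. The image classes are $\langle a,b\rangle A_4=\langle b,-a+b\rangle$ and $\langle b-a,-a\rangle$. The condition $\det=a^2-ab+b^2=1$ gives exactly the six primitive vectors $\pm\langle1,0\rangle,\pm\langle0,1\rangle,\pm\langle1,1\rangle$. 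These form one $A_4$-orbit up to sign, so after relabeling $\sigma$ within its orbit we may take $K\sim\pm\langle1,0\rangle$, with images $\pm\langle0,-1\rangle,\mp\langle1,1\rangle$. The main obstacle is making the counting and intersection argument for essentiality rigorous. First I would cut $\mathbb T^2$ along $A$ and use that its complement consists of exactly two disks, the basins of $\alpha$ and $g(\alpha)$.

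Finally, for conjugacy of two simplest diffeomorphisms $g,g'$: both attractors $A,A'$ are three essential circles in the same homotopy configuration meeting at one point. Construct a homeomorphism $h_0$ of $\mathbb T^2$ isotopic to the identity with $h_0(A)=A'$, equivariant on $A$ via the dynamics on separatrices, using the standard fundamental-domain technique near $\omega$ and along the separatrices. Then extend it across the two complementary disks, which are the source basins, by conjugating the source dynamics on fundamental annuli.
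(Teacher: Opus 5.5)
Your outline is correct and matches the paper on items 1) and 3). In 1) you also supply something the paper takes for granted, the existence of a six-point example, via an $\widehat A_4$-invariant Morse function on the hexagonal torus. You argue differently at two points.

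For the non-contractibility of $K$, the paper uses the very fact you derived, that $g^3$ interchanges the two unstable separatrices of $\sigma$. So $g^3$ maps $K$ onto itself reversing its orientation while preserving the orientation of $\mathbb T^2$. Hence it interchanges the two sides of $K$, which is impossible when one side is a disk and the other a punctured torus. Your counting route also works, but only with the step you leave implicit. Since $g^3(K)=K$ and a contractible simple closed curve on $\mathbb T^2$ bounds a unique disk, $g^3(D)=D$. This excludes nesting of any two of $D$, $g(D)$, $g^2(D)=g^{-1}(D)$, because nesting would propagate to $g^3(D)\neq D$. So $\mathrm{int}\,D$, $g(\mathrm{int}\,D)$, $g^2(\mathrm{int}\,D)$ would be three distinct components of $\mathbb T^2\setminus A$, whereas there are only two source basins. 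The ``Euler characteristic'' variant, on the other hand, gives no contradiction, since $\chi(A)+2=-2+2=0$ in every case. What the splitting of $\mathbb T^2$ into $A$ and two open disks does give is that $H_1(A)\to H_1(\mathbb T^2)$ is onto. Then $[K]=0$ would force $[g(K)]=[g^2(K)]=0$ and $H_1(\mathbb T^2)=0$. (Also, $K\neq g(K)$ alone does not make their homotopy classes distinct, but you never need that.)

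For the conjugacy, the paper does not build a homeomorphism. It invokes the tricolor graph, a complete invariant of gradient-like surface diffeomorphisms, and observes that for every simplest $g_2$ this graph is a single red--green cycle on which $g_2$ acts as a rotation by $\pi/3$, so any two such graphs are equivalent. Your hand-made conjugacy via fundamental domains amounts to reproving that classification in this special case. It is self-contained but much longer. Among other things, it must check that the six unstable separatrices arrive at $\omega$ in the cyclic order forced by $g$ (a rotation by one step). That same fact is what guarantees that $K$ and $g(K)$ genuinely cross at $\omega$, rather than touch, which your determinant condition (like the paper's) tacitly assumes.
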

	\begin{proof} We prove each item of the lemma in sequence.
		
		1) Since $G_4^2=\bigsqcup\limits_{i=0}^2\langle 3_i,2_2,1_0\rangle$, any diffeomorphism in $G^2_4$ necessarily has in its non-wandering set a fixed sink, a source orbit of period $2$, and some orbit of period $3$. The minimality condition \eqref{min} and formula \eqref{sum} lead to the following structure of the non-wandering set of the simplest diffeomorphism $g_2\in G_4^2$:
		$$\Omega_{g_2}=\{\omega,\alpha,g_{2}(\alpha),\sigma,g_{2}(\sigma),g_{2}^2(\sigma)\},$$
		where $\omega,\alpha,\sigma$ are a sink, a source, and a saddle, respectively.
		
		2) Since $\Omega_{g_{2}}$ contains a unique sink $\omega$, we have ${\rm cl}\,W^u_\sigma\setminus W^u_\sigma=\omega$. Hence, the set $K=W^u_\sigma\cup\omega$ is a knot on $\mathbb T^2$. We show that it is non-contractible. Assuming the contrary, we obtain that $K$ bounds a 2-disk $D\subset\mathbb T^2$. By item 4) of Proposition \ref{mn}, the map $g^3_2|_{K}$ reverses orientation. On the other hand, $g^3_2$ preserves orientation, and consequently $g_2^3(D)=\mathbb T^2\setminus D$. This contradicts the fact that the torus is not the union of two disks glued along their boundary.
		
		Thus, $K$ is a non-contractible knot.
		
		3) Let the knot $K$ have homotopy type $\langle a,b\rangle$. Since $g_{{2}\star}=A_4$, the knots $K,g_{2}(K),g_{2}^2(K)$ have the following homotopy types:
		\begin{itemize}
			\item $\langle K\rangle=\langle a,b\rangle$;
			\item $\langle g_{2}(K)\rangle=\langle a,b\rangle A_4=\langle b,b-a\rangle$;
			\item $\langle g_{2}^2(K)\rangle=\langle b,b-a\rangle A_4=\langle b-a,-a\rangle$.
		\end{itemize} 
		By item 2) proved above, the knots $K,g_{2}(K)$ are non-contractible and have a unique intersection point $\omega$. According to \cite{Rol}, the determinant of the matrix $\begin{pmatrix} a & b\\ b& b-a\end{pmatrix}$ must in this case have absolute value $1$. A direct computation yields the following possible homotopy types for $\langle a,b\rangle$: $\pm\langle 1,0\rangle$, $\pm\langle 1,1\rangle$, $\pm\langle 0,1\rangle$. Moreover, if $\langle K\rangle=\langle 1,0\rangle$, then 
		$\langle g_2(K)\rangle=\langle 0,-1\rangle,$ $\langle g^2_2(K)\rangle=\langle -1,-1\rangle$, $\langle g_2^3(K)\rangle=\langle -1,0\rangle,$ $\langle g^4_2(K)\rangle=\langle 0,1\rangle$, $\langle g^5_2(K)\rangle=\langle 1,1\rangle.$ Placing the knots $K,g_{2}(K),g_{2}^2(K)$ on $\mathbb T^2$ according to their homotopy types, we obtain the phase portrait of $g_{2}\in G^2_3$ (see Fig. \ref{picg1}).
		\begin{figure}[h]		
			\centerline{\includegraphics[width=7 cm]{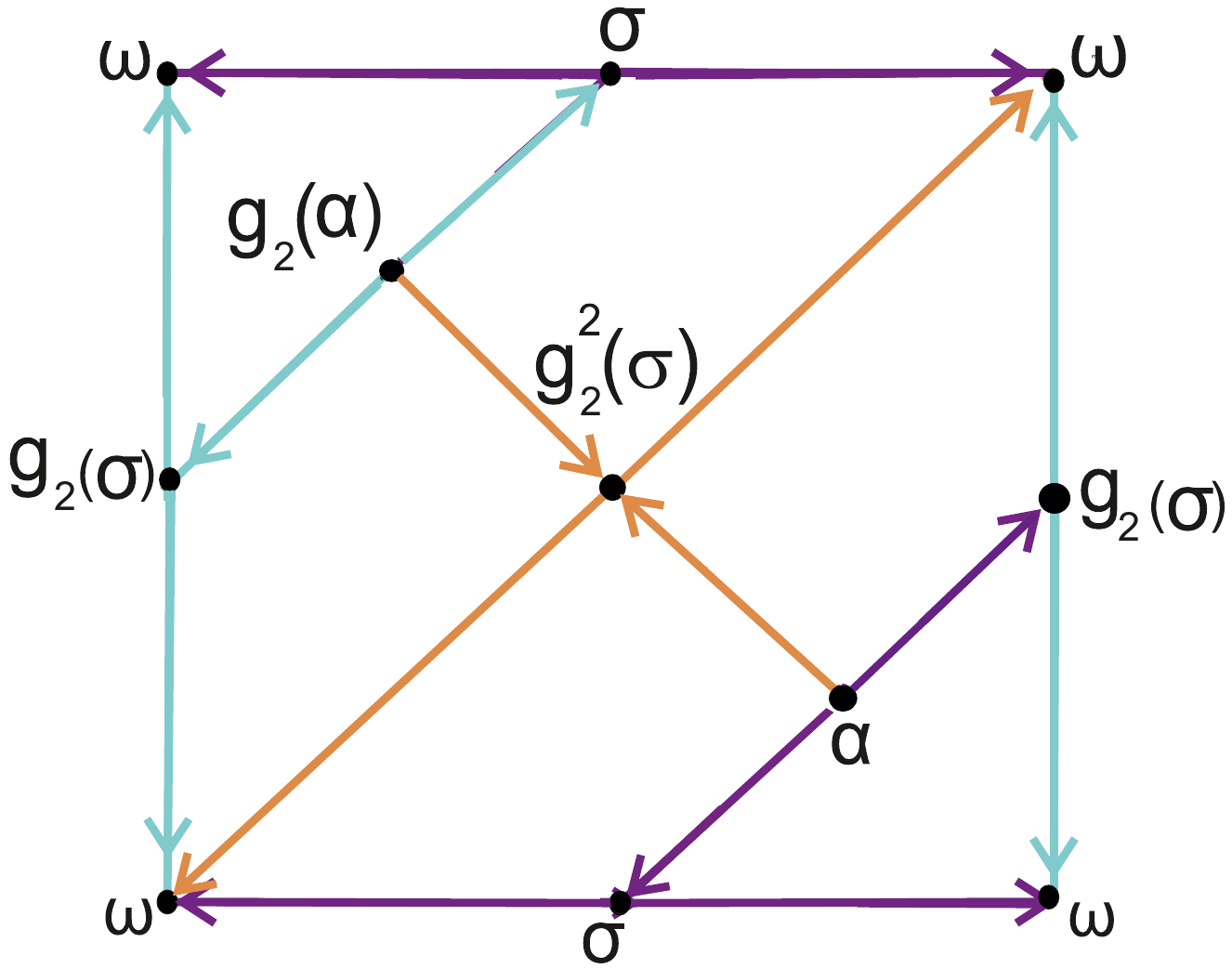}}		
			\caption{\small Phase portrait of $g_{2}\in G^2_4$}\label{picg1}	
		\end{figure}
		
		It remains to show that any two simplest diffeomorphisms in $G_4^2$ are topologically conjugate. To this end, we use the tricolor graph, whose equivalence class is a complete invariant of gradient-like diffeomorphisms (see \cite{treh}). To construct the graph, we color all stable (unstable) saddle separatrices blue (red). In each region complementary to the closure of the saddle separatrices, we choose one $g_2$-invariant curve and color it green. As a result, $\mathbb T^2$ is partitioned into triangular regions by colored curves. We associate a vertex to each such region and connect two vertices by an edge of the corresponding color if the regions share a boundary of that color. The resulting graph $\Gamma_{g_{2}}$ is called the {\it tricolor graph} of $g_{2}$ (see Fig. \ref{gr1}). 
		\begin{figure}[h]		
			\centerline{\includegraphics[width=11 cm]{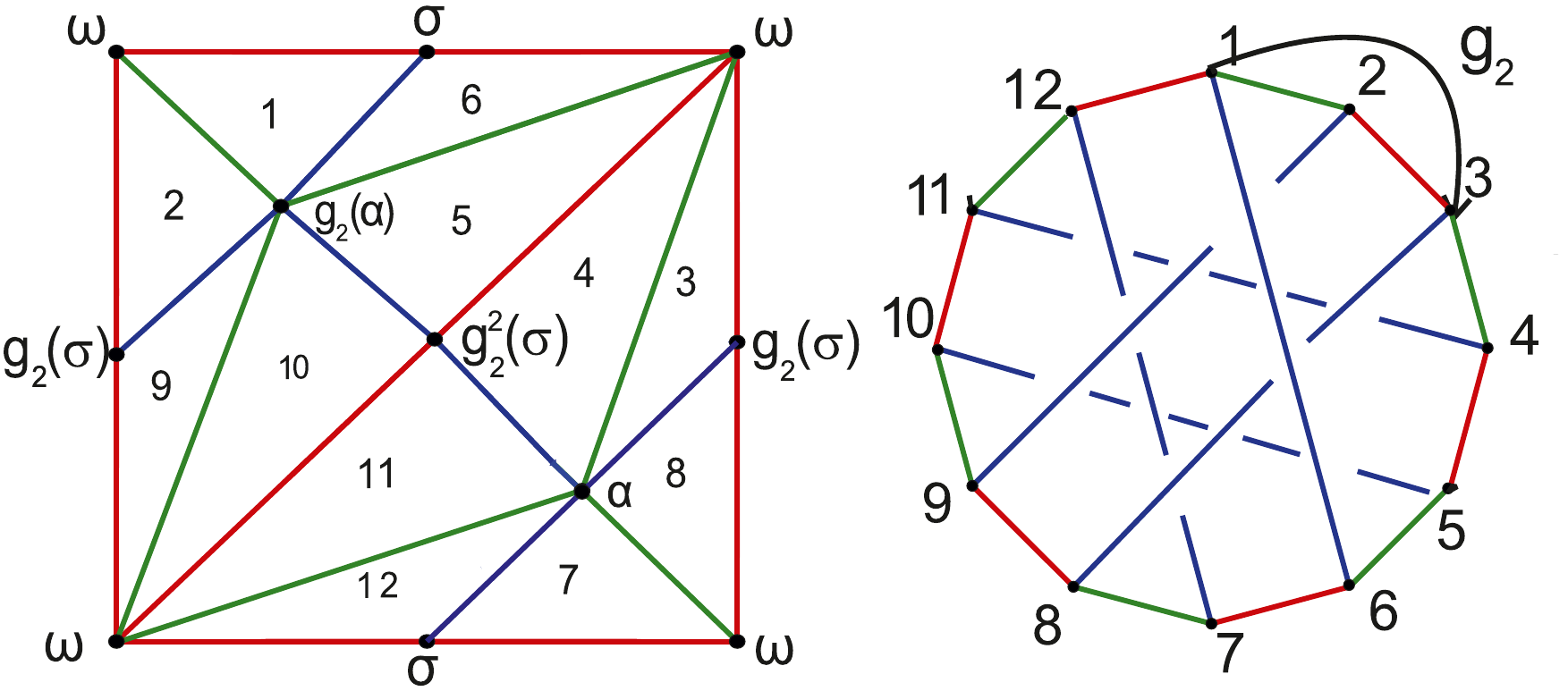}}		
			\caption{\small Tricolor graph of $g_{2}\in G^2_4$}\label{gr1}	
		\end{figure}
		
		By construction, all vertices of $\Gamma_{g_{2}}$ form a single red-green cycle, and $g_{2}$ induces a permutation of the vertices consisting of a rotation of this cycle by $\frac{\pi}{3}$. Two tricolor graphs are {\it equivalent} if there exists an isomorphism between them preserving edge colors and conjugating the permutations. The equivalence class of the graph is a complete invariant of topological conjugacy of the corresponding gradient-like diffeomorphism. Moreover, for any such graphs there exists an isomorphism commuting with the rotation, which completes the proof.	
	\end{proof}
	
	Figure \ref{4} shows the phase portraits of the simplest diffeomorphisms $g_i\in G_4^i$.
	\begin{figure}[H]
		\center{\includegraphics[width=1\linewidth]{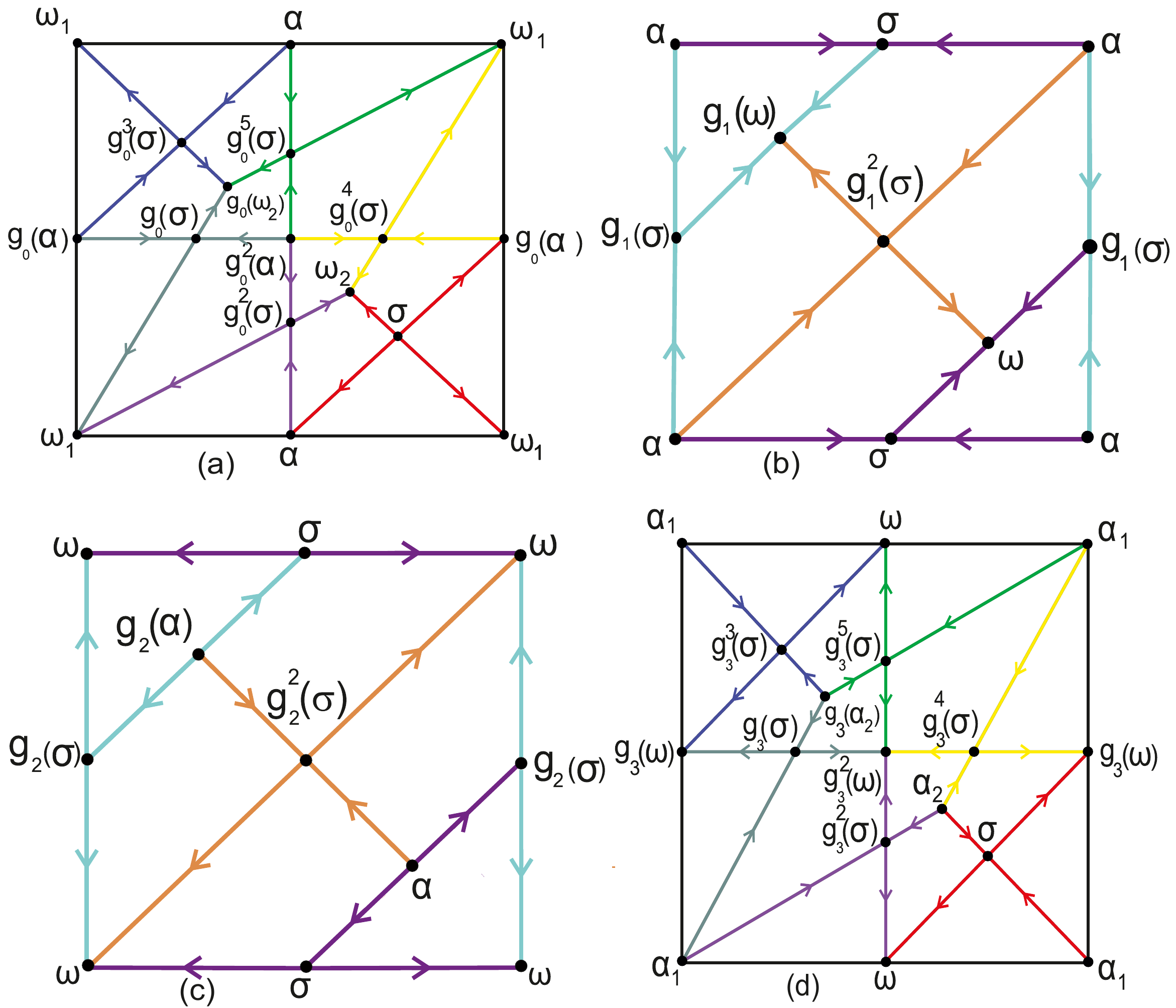}}
		\caption{Phase portraits of diffeomorphisms: (a) $g_{0}\in G^0_4$, (b) $g_{1}\in G_4^1$, (c) $g_{2}\in G_4^2$, (d) $g_{3}\in G_4^3$}
		\label{4}
	\end{figure}
	
	Figure \ref{3} shows the phase portraits of the simplest diffeomorphisms $g_i\in G_3^i$.
	\begin{figure}[H]
		\center{\includegraphics[width=1\linewidth]{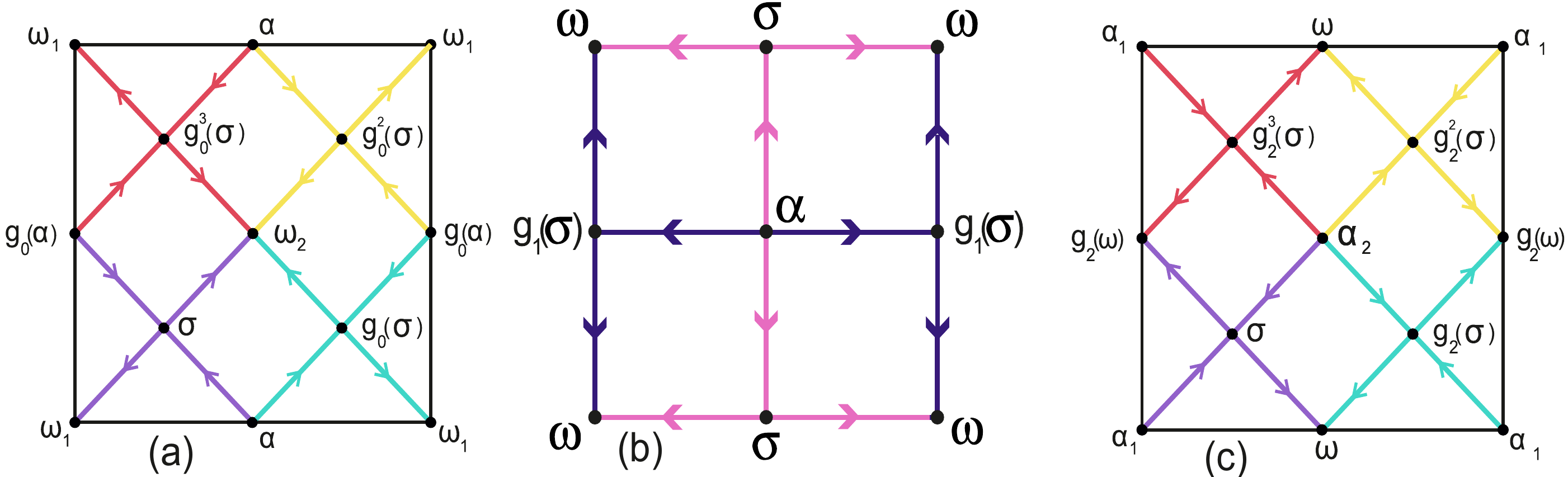}}
		\caption{Phase portraits of diffeomorphisms: (a) $g_{0}\in G^0_3$, (b) $g_{1}\in G_3^1$, (c) $g_{2}\in G_3^2$}
		\label{3}
	\end{figure}
	
	\section{Absence of a stable arc connecting distinct simplest diffeomorphisms}	
	
	The absence of a stable arc connecting distinct isotopic simplest diffeomorphisms is an immediate consequence of the following lemma.  
	\begin{lemma}\label{step1} For any stable arc $f_t,\,t\in[0,1]$ such that $f_0\in G_j^i$, we have $f_1\in G_j^i$.
	\end{lemma}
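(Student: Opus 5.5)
The argument tracks the characteristic along the arc. A stable arc changes the dynamics only at finitely many bifurcation points, so it suffices to show that crossing a single bifurcation diffeomorphism $f_{b}$ keeps us inside $G_j^i$. First note that the whole arc is an isotopy, so $(f_t)_\bigstar=A_j$ for all $t$; since every structurally stable $f_t$ is Morse--Smale on $\mathbb T^2$ with $f_{t\bigstar}\neq id$, it lies in some $G_j$, the same $j$ throughout. Between consecutive bifurcation values the diffeomorphisms are structurally stable and topologically conjugate to each other. Hence they share the same characteristic, i.e.\ the same periods and Morse indices of the exceptional orbits. It remains to analyse the passage through each $f_{b_i}$, using items 1)--4) of the description of stable arcs from \cite{NPT}. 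One point is not automatic: arcs may a priori pass through non-gradient-like Morse--Smale systems with heteroclinic intersections. Since the classes $G_j^i$ are defined by characteristic alone, that only affects how we attribute the characteristic, and we extend the definition of the characteristic to all Morse--Smale $f$ with $f_\bigstar=A_j$ via the periods of orbits. By the Lefschetz argument and the structure of periodic orbits forced by the isotopy class, the exceptional short-period orbits are still present there. I would check that \cite{treh}'s period description persists.

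Next I would handle the two bifurcation types. At a saddle-node, a non-hyperbolic orbit of some period $m$ appears or disappears, together with two hyperbolic orbits of period $m$ whose Morse indices differ by one (sink--saddle or saddle--source). At a flip, an orbit of period $m$ changes Morse index by one and an orbit of period $2m$ is born or dies. The goal is to show that none of this alters the indices of the exceptional orbits of periods $n_l$ in a forbidden way.

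For $j=3,4$ the components allow an arbitrary index $i\in\{0,1,2\}$ on the orbit of largest exceptional period ($2$ or $3$, i.e.\ $n_1=n/2$), but fix the indices $i_l\in\{0,2\}$ on the orbits with $n_l<n/2$. By \cite{treh} (as quoted in the statement), an orbit with $n_l<n/2$ can never have index $1$ for a gradient-like, or more generally structurally stable, diffeomorphism in $G_j$. The orbits of period $n_l$ are counted exactly: by Proposition \ref{mn} 2), $f$ has exactly one orbit of each period $n_l$ (the points of $B_{\phi_f}$), plus orbits of period $n$. Changing $i_l$ from $0$ to $2$ would therefore require passing through index $1$, which is impossible. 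Alternatively it would require a saddle-node creating or annihilating an orbit of period $n_l$, but that would produce two orbits of period $n_l$ on one side of the bifurcation, contradicting uniqueness. A flip changes an index by one and likewise passes through index $1$, or it creates an orbit of period $2n_l$ while changing the period-$n_l$ orbit. Here $2n_l\le n$: it equals $n$ only when $n_l=n/2$, which is fine, and otherwise $2n_l<n$ is a forbidden period, e.g.\ $n_l=1$, $n=4$ gives period $2$, which already exists. I would argue case by case using the characteristics $4(2,1,1)$ and $6(3,2,1)$. The period-$n/2$ orbit is free to change its index, which matches the unions over $i$ in Theorem~\ref{C_i}.

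The main obstacle is justifying that the uniqueness-of-orbit-per-period and the ``no index $1$ for $n_l<n/2$'' constraints hold for every structurally stable diffeomorphism on the arc, not only gradient-like ones, and also right at the bifurcation. I would first try to derive both directly from the isotopy class: the periodic points of period dividing $n_l$ are counted by the Lefschetz number $L(f^{n_l})$, computed from $A_j^{n_l}$, which is determined by the class. The index-$1$ exclusion would come from comparing $L(f^{n_l})$ with the forced fixed-point count.
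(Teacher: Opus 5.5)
Your bookkeeping argument is correct under the standing assumption that every structurally stable $f_t$ on the arc is gradient-like. The paper makes the same assumption tacitly when it writes $f_t\in G_j^{i_t}$ for every $t$. Your route differs from the paper's in how saddle-nodes are excluded. The paper argues locally: near a point $p_0$ of period $k<m/2$, $f^k$ is a homothety composed with a rotation of order $m/k\geq 3$, so no invariant center curve passes through $p_0$. It excludes flips, as you do, because saddles have period $m$ or $m/2$.

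You use one global constraint for both bifurcation types, and it suffices by itself. A saddle-node pairs orbits of indices $k$ and $k+1$, and a flip changes the index of the flipping orbit by one. So any bifurcation touching an orbit of period $<n/2$ would produce a saddle of period $<n/2$ on one side, contradicting \cite{treh}. This holds whether the short orbit is the flipping orbit or the doubled one, since the latter's half-period is also $<n/2$. Hence these orbits persist with constant indices and constant number. Your count via $B_{\phi_f}$ is therefore not needed, and as stated it is wrong: for $G_3$ there are two fixed points, not one orbit of each period $n_l$.

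Your version has two advantages. It correctly leaves the orbit of period $n/2$ free, whereas the paper's set $P_{f_t}$ of points of periods $1$ and $2$ is too large for $G_3$, where $2=n/2$. It also avoids the paper's local topological claim. That claim needs care, because $z\mapsto\lambda e^{i\theta}z$ is topologically conjugate to a homothety and does have invariant arcs through $0$ (logarithmic spirals).

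What does not work is your plan to remove the standing assumption with Lefschetz numbers. $L(f^k)$ is a signed count, so it cannot rule out extra pairs of periodic points of opposite index. Moreover, a period-$k$ saddle of negative orientation type has fixed-point index $+1$ for $f^k$, just like a node. For $A_4$, $L(f^2)=2-\operatorname{tr}A_4^2=3$ is consistent both with the period-$2$ orbit being a source and with it being a negatively oriented saddle. So your passages about extending the characteristic to non-gradient-like Morse--Smale systems are not a proof. Either state explicitly that the arcs considered consist of gradient-like diffeomorphisms away from the bifurcation values, as the paper implicitly does, or give a genuinely different argument for that case.
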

	\begin{proof} Let $P_{f_t}$ be the set of periodic points of the gradient-like diffeomorphism $f_t$ with periods $1$ and $2$. From the definition of the classes $G_j^i$, for each $f_t$ there exists an index $i_t\in\{0,\dots,r_j\}$ such that $f_t\in G_j^{i_t}$, and $i_{t_1}=i_{t_2}$ if and only if the maps $f_{t_1}|_{P_{f_{t_1}}}$ and $f_{t_2}|_{P_{f_{t_2}}}$ are topologically conjugate. We show that $i_t=i_0$ for any $t\in[0,1]$.
		
		Since all points in $P_{f_{0}}$ have period less than $\frac{m_{f_0}}{2}$, they are all nodal. For a point $p_0\in P_{f_{0}}$ to participate in a saddle-node bifurcation, there must exist an $f^{per(p_0)}_0$-invariant curve (center manifold) containing this point in its interior. However, in a neighborhood of $p_0$, the map $f^{per(p_0)}_0$ is topologically conjugate to a composition of a homothety and a rotation of the plane by an angle less than $\pi$, and therefore such a curve does not exist.
		
		Period-doubling bifurcations involve points of adjacent indices whose periods differ by a factor of two; hence at least one of these points is a saddle. By Proposition \ref{mn}, any saddle point of $f_0$ has period $m_{f_0}$ or $\frac{m_{f_0}}{2}$, while $p_0$ has period less than $\frac{m_{f_0}}{2}$, so no such saddle point exists.
		
		Since any bifurcation on a stable arc preserves the dynamics (up to topological conjugacy) outside a neighborhood of the bifurcation points, the maps $f_{t}|_{P_{f_{t}}}$ and $f_{0}|_{P_{f_{0}}}$ are topologically conjugate for any $t\in[0,1]$.
	\end{proof}
	
	\section{Construction of a stable arc connecting an arbitrary diffeomorphism to the simplest one}
	
	\begin{lemma}\label{step2} Any diffeomorphism $g \in G^i_j$ is connected by a stable arc to some simplest diffeomorphism $g_{i}\in G_j^i$.
	\end{lemma}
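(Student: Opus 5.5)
Given $g\in G^i_j$ with $j\in\{3,4\}$, I will simplify $g$ in stages by stable arcs made only of elementary saddle-node bifurcations, each inserted via Proposition~\ref{sad-no}. Throughout I use the decomposition $g=\phi_g\xi^1_g$ of Proposition~\ref{mn}. The periodic data of the points of period $1$ and $2$ never change along such an arc: by the argument of Lemma~\ref{step1}, these points are nodal with a rotational local structure, so they cannot bifurcate. Hence all the work concerns the orbits of the maximal period $n$ and of period $n/2$ (the latter being $2$ for $j=3$ and $3$ for $j=4$). The goal is to reduce $|\Omega_g|$ to the minimum $(\ref{min})$, and then to use that all simplest diffeomorphisms of $G^i_j$ are topologically conjugate (as in Lemma~\ref{g10}). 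A conjugacy $h$ isotopic to the identity is realized by an arc $h_tgh_t^{-1}$ of conjugate diffeomorphisms, which is stable. If instead $h_\bigstar$ commutes with $A_j$ but is nontrivial, I will compose $h$ with a suitable power of $\widehat A_j$ (or with $\widehat{-E}$), which commutes with $g$ up to isotopy, so this case reduces to the previous one.

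The reduction step goes as follows. Suppose there is a saddle orbit of period $m$ having a separatrix whose closure ends at a sink orbit of the same period, with that separatrix being the only one of the saddle entering this sink's basin from that side. This occurs whenever some sink has exactly one incoming unstable separatrix; otherwise I first lower the number of separatrices. Take a small disk $D$ that is a neighbourhood of the closure of this separatrix together with the sink. Then $D,g(D),\dots,g^{m-1}(D)$ are disjoint and $g^m(D)\subset{\rm int}\,D$. Proposition~\ref{sad-no} then replaces the saddle--sink pair by a single sink via a saddle-node arc; this is the usual cancellation of a saddle with an adjacent node. The symmetric step, applied to $g^{-1}$, cancels a saddle with an adjacent source. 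I will repeatedly cancel pairs among orbits of periods $n$ and $n/2$. This is legitimate for period $n/2$ as well, because by Proposition~\ref{mn} a saddle of period $n/2$ has negative orientation type and no period-doubling occurs. Once no period-$n$ node is left adjacent to a saddle through a single separatrix, I count. By formula~(\ref{sum}) and the fixed characteristic, the number of remaining saddle orbits is determined by the node orbits. The minimality argument of Lemma~\ref{g10} (non-contractible knots, determinant $\pm1$ from \cite{Rol}) then forces the portrait to be the simplest one. If an orbit of period $n/2$ still has the ``wrong'' index, I allow a period-doubling or an auxiliary saddle-node creation: the components $G^i_j$ are unions over the index $i$ of the $3_i$ or $2_i$ orbit, so that index is allowed to change.

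The main obstacle is to show that a cancellable saddle--node pair always exists whenever $|\Omega_g|$ exceeds the minimum. For this I plan an Euler-characteristic and graph argument. Consider the graph whose vertices are the sinks and whose edges are the closures of the unstable separatrices, i.e.\ the attractor $A$ of Proposition~\ref{<n-2}, which is connected. If every sink is the endpoint of at least two distinct separatrix ends, and the same holds dually for sources, then counting shows that $C_1\le C_0+C_2$ holds only in the minimal configuration. The delicate point is loops, where both separatrices of one saddle go to the same sink. There I will use the invariance under $\phi_g$ together with the forced non-contractibility, as in item 2) of Lemma~\ref{g10}, to argue that such loops occur only in the minimal portrait.
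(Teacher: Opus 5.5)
There is a genuine gap, and it is exactly the step you flag as the main obstacle. Nothing in the plan shows that a non-minimal $g\in G^i_j$ has a cancellable pair; the fallback ``otherwise I first lower the number of separatrices'' is never specified, and the counting you propose cannot supply the missing argument. On the torus, formula~(\ref{sum}) reads $C_1=C_0+C_2$ for \emph{every} Morse--Smale diffeomorphism. Your degree hypotheses only yield $C_1\geq C_0$ and $C_1\geq C_2$, which are automatic, so no inequality of this kind singles out the minimal portrait.

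Here is a concrete example. Take the simplest $g_2\in G^2_4$ of Lemma~\ref{g10}. By a saddle-node bifurcation on the orbit of stable separatrices of $\sigma$, create a period-$6$ source $\alpha'$ and a period-$6$ saddle $\sigma''$. The result lies in $G^2_4$ and is not minimal. Yet every sink receives at least two unstable separatrices and every source at least two stable ones. Moreover, both unstable separatrices of $\sigma''$ end at $\omega$, forming a non-contractible loop homotopic to $K$. So loops are not confined to the minimal portrait either.

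The cancellation mechanism also misuses Proposition~\ref{sad-no}. A neighbourhood of one separatrix plus its sink contains the saddle, and $g^m$ stretches it along the other unstable separatrix, so it is never trapping. A trapping disk must contain all of ${\rm cl}\,W^u_\sigma$, and its iterates are disjoint only if both endpoints are distinct nodes of period $m$. This fails as soon as an endpoint has smaller period (the nodes of period $1$, $2$ or $\frac n2$). For instance, it fails for $\sigma''$ above, whose closed stable manifold contains $\alpha$, so your procedure cannot simplify that example at all.

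Your treatment of period-$\frac n2$ saddles is reversed as well. By Proposition~\ref{mn} they have negative orientation type, which \emph{excludes} a saddle-node (that needs multiplier $+1$). There is also no second orbit of period $\frac n2$ to pair with, so such an orbit can change only through a flip.

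What is missing is the paper's global use of Proposition~\ref{sad-no}, whose disk may carry arbitrary dynamics. The paper carries this out for $G^2_4$ in two cases.
\begin{itemize}
\item If the period-$3$ orbit is a saddle $\sigma$, the knots $K=W^u_\sigma\cup\omega$, $g(K)$, $g^2(K)$ form an attractor. Its trapping neighbourhood leaves two disks with $g^{-1}(D_1)\subset{\rm int}\,D_2$ and $g^{-1}(D_2)\subset{\rm int}\,D_1$. Brouwer's theorem puts $\alpha$ and $g(\alpha)$ in them, and a single application of Proposition~\ref{sad-no} to $g^{-1}$ with $m=2$ removes everything else at once (in the example, all the $\alpha'$ and $\sigma''$).
\item If the period-$3$ orbit is a node $\tilde\omega$, connectivity of $\Omega^0_g\cup W^u_{\Omega^1_g}$ (Proposition~\ref{<n-2}) gives a period-$6$ saddle $\tilde\sigma$ with $\tilde\omega$ in the closure of both $W^u_{\tilde\sigma}$ and $W^u_{g^3(\tilde\sigma)}$. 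A flip bifurcation of these orbits turns $\tilde\omega$ into a period-$3$ saddle, reducing to the first case.
\end{itemize}
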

	\begin{proof} Due to the similarity of the construction ideas in all cases, we give a comprehensive proof for the diffeomorphism $g\in G_4^2$. 		
		
		According to Lemma \ref{g10}, the set of orbits of period less than $6$ consists of one fixed sink $\omega$, one source orbit of period $2$ $\alpha, g(\alpha)$, and one orbit of period $3$. The period-$3$ orbit may be saddle or nodal. We now consider two cases separately: 1) the period-$3$ orbit is saddle; 2) the period-$3$ orbit is nodal. 
		
		1) Suppose $g$ has a saddle point $\sigma$ of period $3$. We show that in this case $g$ is connected by a stable arc to the simplest diffeomorphism.
		
		Since $g_{\bigstar}=A_4$, similarly to Lemma \ref{g10} one can prove that the knots $K=W^u_\sigma\cup\omega,{g}(K),{g}^2(K)$ have homotopy types $\pm\langle 1,0\rangle$, $\mp\langle 1,1\rangle$, $\pm\langle 0,1\rangle$. Moreover, by Proposition \ref{sad-no}, the set $A=K\cup{g}(K)\cup{g}^2(K)$ is an attractor of ${g}$. Its trapping neighborhood $U_{A}$ partitions $\mathbb T^2$ into $2$-disks $D_1 \sqcup D_2$ such that ${g}^{-1}(D_1) \subset {\rm int}\, D_2, {g}^{-1}(D_2) \subset {\rm int}\, D_1$ (see Fig. \ref{disk}).
		\begin{figure}[h]	
			\centerline{\includegraphics[width=7 cm]{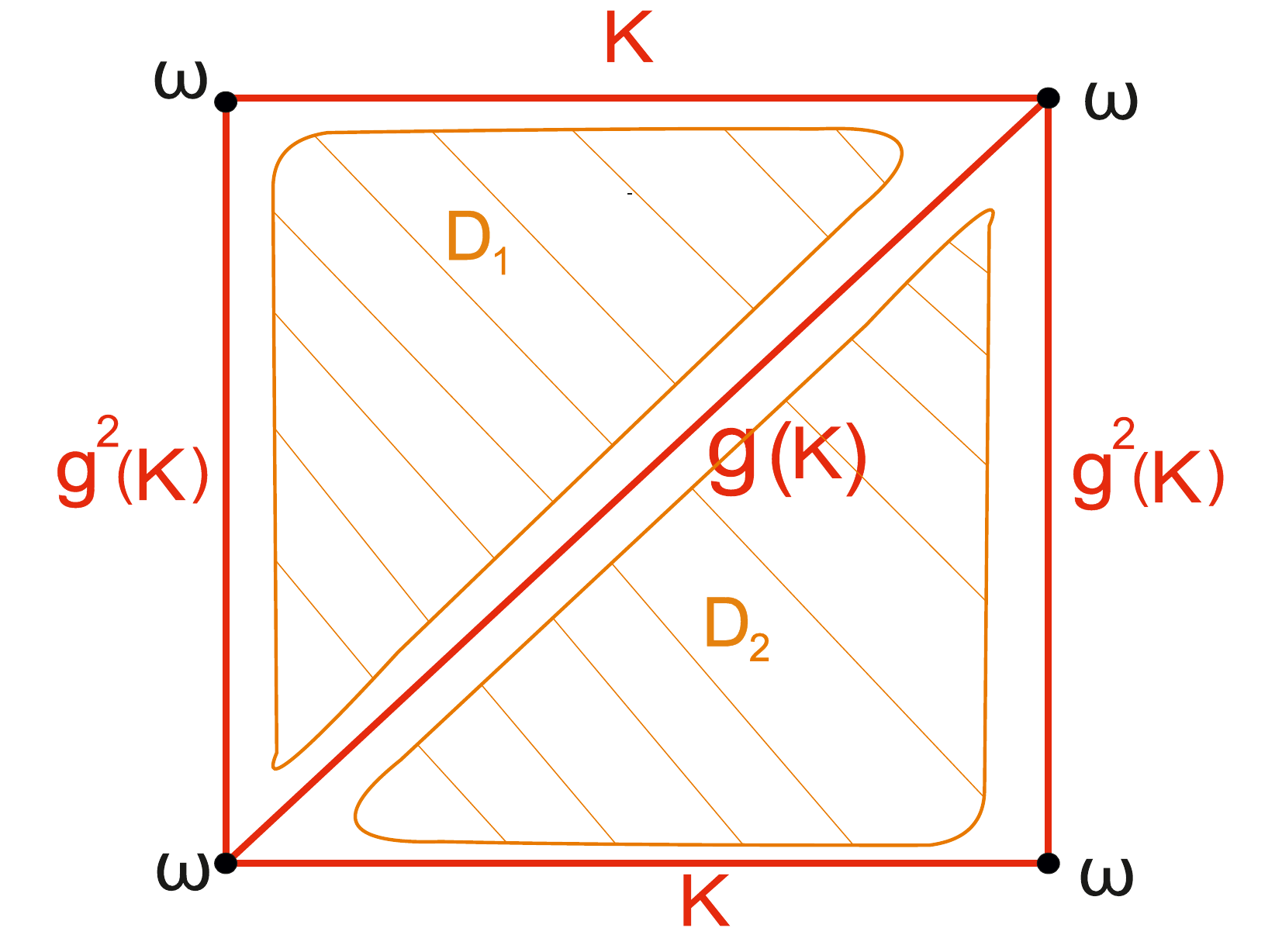}}		
			\caption{\small Illustration for case 1)}\label{disk}	
		\end{figure}	
		Thus, ${g}^{-2}(D_i)\subset {\rm int}\, D_i$, $i=1,2$, so by the Brouwer fixed point theorem, ${g}^{-2}$ has at least one fixed point in $D_i$. Since ${g}^{-2}$ has exactly two fixed points outside $A$, we have $\alpha\in D_1, {g}(\alpha)\in D_2$. By Proposition \ref{sad-no}, there exists a stable arc connecting ${g}$ to the simplest diffeomorphism ${g}_2\in G^2_4$. 
		
		2) Assume that $g$ has a sink $\tilde\omega$ of period $3$ (the proof for a source is analogous). We show that in this case $g$ is connected by a stable arc to a diffeomorphism satisfying case 1). 
		
		By Proposition \ref{<n-2}, the set $\Omega^0_{g}\cup W^u_{\Omega_{g}^1}$ is connected; hence there exists a saddle point $\tilde\sigma$ such that ${\rm cl}\,W^u_{\tilde\sigma}\setminus W^u_{\tilde\sigma}=\tilde\omega\sqcup\hat\omega$, where $\hat\omega$ is either the fixed sink $\omega$ or a sink of period $6$. Since the period of $\tilde\sigma$ is $6$, in both cases (see Fig. \ref{case2}) the sink $\tilde\omega$ belongs to the closure of the unstable manifolds $W^u_{\tilde\sigma},W^u_{g^3(\tilde\sigma)}$. A flip bifurcation involving the orbits of $\tilde\omega,\tilde\sigma$ connects $g$ to a diffeomorphism $\tilde g$ having a saddle point $\sigma$ of period $3$ (for details, see \cite{np2021}). 
		\begin{figure}[h]	
			\centerline{\includegraphics[width=10 cm]{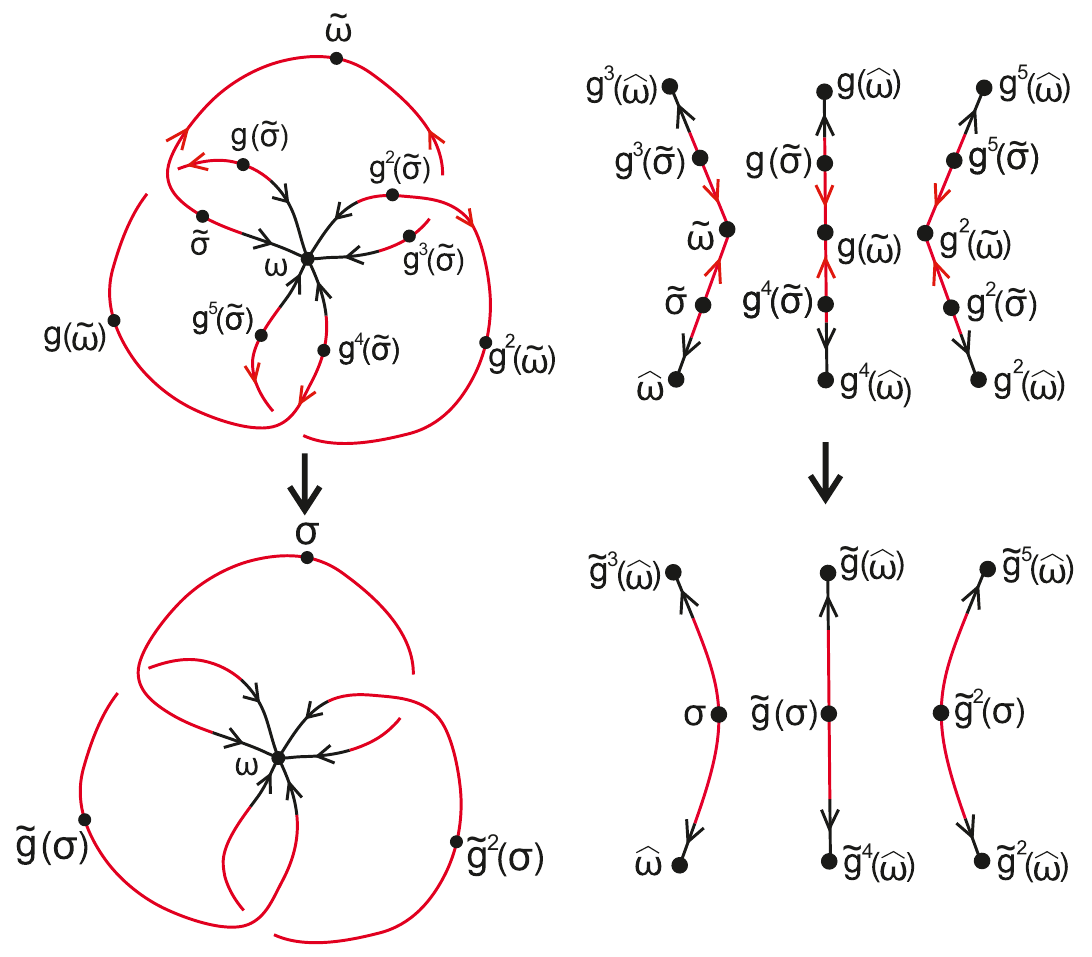}}		
			\caption{\small Illustration for case 2)}\label{case2}	
		\end{figure}
		
		Since, by Lemma \ref{g10}, the simplest diffeomorphisms in the class $G^2_4$ are pairwise topologically conjugate, any two of them are connected by a stable arc (for details, see \cite{np2021}), which completes the proof.
	\end{proof}
	
\end{document}